\newcommand{\bC}{\ensuremath{\mathbb{C}}}
\newcommand{\bE}{\ensuremath{\mathbb{E}}}
\newcommand{\bN}{\ensuremath{\mathbb{N}}}
\newcommand{\bP}{\ensuremath{\mathbb{P}}}
\newcommand{\bR}{\ensuremath{\mathbb{R}}}
\newcommand{\cF}{\ensuremath{\mathcal{F}}}
\theoremstyle{plain}
\newtheorem{Thm}{Theorem}[section]
\newtheorem{Lem}[Thm]{Lemma}
\theoremstyle{definition}
\newtheorem{Rem}[Thm]{Remark}
\newtheorem{Ex}[Thm]{Example}
\numberwithin{equation}{section}
\begin{document}

\begin{center}
{\Large \bf 
Higher order approximations in arcsine laws for subordinators
}
\end{center}
\begin{center}
Toru Sera\footnote{Department of Mathematics, Graduate School of Science, Osaka University. Toyonaka, Osaka 560-0043, JAPAN.}
\end{center}


\begin{abstract}
We establish higher order approximations in the
Dynkin--Lamperti theorem,  a limit theorem for the distribution of a killed subordinator immediately before its first passage time over a fixed level. 
For this purpose,
we also study asymptotic expansions of potential densities for killed subordinators. 
\end{abstract}





\section{Introduction}

L\'evy's arcsine law for a one-dimensional Brownian motion is now classical in probability theory, and can be generalized to a variety of distributional identities and limit theorems. Among these generalizations, we focus on the Dynkin--Lamperti theorem for killed subordinators \cite[Theorem III.6]{Ber}, \cite[Theorem 5.16]{Kyp}. 
In this paper, we study asymptotic expansions of potential densities for killed subordinators, and then use them to obtain higher order approximations in the Dynkin--Lamperti theorem. 
We are motivated by similar approximations in operator renewal theory \cite{MelTer12, MelTer13, Ter15, Ter16}. The studies of convergence rates in other types of arcsine laws can be found in \cite{GolRei, Dob15, BorShe} for example.
The studies of potential densities for special subordinators can also be found in the context of potential analysis for subordinate Brownian motions
\cite{SonVon06, SikSonVon, BBKRSV, SchSonVon}.
We also remark that \cite{ChaKypSav} and \cite{DorSav} obtained series representations of potential densities for subordinators with strictly positive drift. Nevertheless, our asymptotic expansions of potential densities can also be applied to subordinators without drift.

Before going into the detail, let us briefly recall the Dynkin--Lamperti theorem.
Let $X=(X_t)_{t\geq0}$ be a subordinator, that is, a one-dimensional, non-decreasing L\'evy process with $\bP(X_0=0)=1$. We always assume that $X$ is not identically zero. We write $\Phi(\lambda)$ $(\lambda\geq0)$ for the Laplace exponent of $X$, that is, $\bE[\exp(-\lambda X_t)]= \exp(-\Phi(\lambda)t)$ ($\lambda,t\geq0$).
For $s>0$, let  $T(s)=\inf\{t\geq0\mid X_t>s\}$ denote the first passage time of $X$ strictly above $s$.
Then the following two conditions are equivalent for $0<\alpha<1$:
\begin{enumerate}[label=\upshape(\arabic*)]
\item\label{Intro:RV} For any $\lambda>0$, 
\begin{align}
\frac{\Phi(\lambda s)}{\Phi(s)}\to \lambda^\alpha, \quad\text{as $s\to 0+$}.
\end{align}
\item\label{Intro:arcsine} For any $0\leq t\leq 1$,
\begin{align}
    \bP\bigg(\frac{X_{T(s)-}}{s}\leq t\bigg)
    \to
    \frac{\sin(\pi\alpha)}{\pi}
    \int_0^t
    x^{-1+\alpha}(1-x)^{-\alpha}\,dx,
    \quad
    \text{as $s\to+\infty$.}	
\end{align}
\end{enumerate}
The limit distribution in \ref{Intro:arcsine} is the beta distribution with parameter $(\alpha,1-\alpha)$. In the case $\alpha=1/2$, it is nothing else but the usual arcsine distribution. If $X$ is an $\alpha$-stable subordinator, then both sides in \ref{Intro:RV} (respectively, in \ref{Intro:arcsine}) are equal for any $s>0$ without limit.

Let us illustrate the main result of this paper by giving examples. 
We are interested in convergence rates and higher order approximations in \ref{Intro:arcsine}.
By using our main results, we can obtain the following estimates. 
 
\begin{Ex}
Let us consider the case $\Phi(\lambda)=\lambda^\alpha+\lambda^\beta$ $(\lambda\geq0)$ for some $0<\alpha<\beta<1$. The corresponding subordinator $X$ is the independent sum of an $\alpha$-stable subordinator and a $\beta$-stable subordinator.	We will see that $\bP(X_{T(s)-}/s\in dx)$ admits a density which is continuous on $(0,1)$ and can be estimated as
\begin{align}
&\frac{\bP(X_{T(s)-}/s\in dx)}
     {dx}=
	\frac{\sin(\pi \alpha)}{\pi}
    x^{-1+\alpha}(1-x)^{-\alpha}
    +O(s^{-(\beta-\alpha)}),
    \quad
    \text{as $s\to+\infty$,}
    \label{Intro:double-stable}
\end{align}
uniformly in $x\in I$ for any closed interval $I\subset(0,1)$.
See Example \ref{Ex2:double-stable-infty} for a higher order approximation. Moreover, we can also obtain an explicit formula and an asymptotic expansion of the potential density, as discussed in Example \ref{Ex:double-stable-infty} and Remark \ref{Rem:double-stable}. We remark that the explicit formula of the potential density can also be found in \cite[Example 5.25]{Kyp}.
\end{Ex}

\begin{Ex}
	Let us consider the case $\Phi(\lambda)=\log(1+\lambda^\alpha)$ for some $0<\alpha<1$. The corresponding subordinator $X$ is called a geometric $\alpha$-stable subordinator.
	We will see that $\bP(X_{T(s)-}/s\in dx)$ admits a density which is continuous on $(0,1)$ and can be estimated as
\begin{align}
&\frac{\bP(X_{T(s)-}/s\in dx)}
     {dx}=
	\frac{\sin(\pi \alpha)}{\pi}
    x^{-1+\alpha}(1-x)^{-\alpha}
    +O(s^{-\alpha}),
    \quad
    \text{as $s\to+\infty$,}
    \label{Intro:geometric}
\end{align}
uniformly in $x\in I$ for any closed interval $I\subset(0,1)$.
See Example \ref{Ex2:geometric} for a higher order approximation. In this case, we do not know an explicit formula of the potential density in closed form. Nevertheless, we can obtain an asymptotic expansion  of the potential density, as discussed in Example \ref{Ex:geometric}.
\end{Ex}

The rest of this paper is organized as follows. In Section \ref{sec:pre}, we recall and summarize the Dynkin--Lamperti theorem for killed subordinators.
In Section \ref{sec:main}, we state our main results, that is, asymptotic expansions of potential densities and higher order approximations in the Dynkin--Lamperti theorem. Section \ref{sec:Proof-infty} is devoted to prove the main results. In Section \ref{sec:Ex}, we apply the main results to specific killed subordinators.

\section{Preliminaries}\label{sec:pre}

In this section, let us recall basic notions and properties of killed subordinators. 
We refer the reader to \cite{Ber}, \cite{Sat} and \cite{Kyp} for the details.


Let $X=(X_t)_{t\geq0}$ be a killed subordinator defined on a probability space $(\Omega,\cF,\bP)$.
In other words, there is a one-dimensional, non-decreasing L\'evy process $Y$ starting at the origin, and there is a $(0,+\infty]$-valued, exponentially-distributed random variable $\zeta$ such that $Y$ and $\zeta$ are independent, and
\begin{align}
  X_t
  =
  \begin{cases}
  Y_t, &t<\zeta,
  \\
  \partial, &t\geq\zeta, 	
  \end{cases}	
\end{align}
where $\partial$ denotes a cemetery point. 
Then the Laplace transform of $X$ can be represented as
\begin{align}\label{Laplace-trans}
   \bE[\exp(-z X_t)]
   =
   \exp(-\Phi(z)t),
   \quad 
   \text{$z\in \bC$ with ${\rm Re}\,z\geq0$, $t\geq0$.}	
\end{align}
Here the function $\Phi(z)$ is called the Laplace exponent of $X$, and it can be expressed in the form
\begin{align}\label{Laplace-exp}
   \Phi(z)
   =
   a+bz+\int_{(0,+\infty)}(1-e^{-z x})\,\Pi(dx),
   \quad 
   \text{$z\in \bC$ with ${\rm Re}\,z\geq0$},	
\end{align}
where $a,b\in[0,+\infty)$ are constants and $\Pi(dx)$ is a $\sigma$-finite measure on $(0,+\infty)$ satisfying
\begin{align}\label{Levy-meas}
   \int_{(0,+\infty)}(1\wedge x)\,\Pi(dx)<+\infty.	
\end{align}
The constant $a$ is called the killing rate, $b$ the drift coefficient, and the measure $\Pi(dx)$ the L\'evy measure of $X$. 
They are uniquely determined by the law of $X$.
Conversely, for any $a, b\in[0,+\infty)$ and any $\sigma$-finite measure $\Pi(dx)$ on $(0,+\infty)$ with \eqref{Levy-meas}, there exists a killed subordinator $X$ satisfying \eqref{Laplace-trans} and \eqref{Laplace-exp}. We say that $X$ is a subordinator if the killing rate is zero.

In the following we always assume $X$ is not identically zero, that is, $a>0$ or $b>0$ or $\Pi((0,+\infty))>0$. Then ${\rm Re}\,\Phi(z)>0$ for any $z\in\bC$ with ${\rm Re}\, z>0$.
The potential measure $U(dx)$ of $X$ is the locally finite measure on $[0,+\infty)$ defined as
\begin{align}
  U(A)
  =\bE\bigg[\int_0^{+\infty} \mathbbm{1}_{\{X_t\in A\}}\,dt\bigg],
  \quad
  \text{for any Borel set $A\subset [0,+\infty)$.}	
\end{align}
The Laplace transform of $U(dx)$ is expressed in the form
\begin{align}\label{Laplace-U}
   \int_{[0,+\infty)} e^{-zx}\,U(dx)
   =
   \frac{1}{\Phi(z)},
   \quad
   \text{$z\in\bC$ with ${\rm Re}\,z>0$}.  	
\end{align}
Hence $U(\{0\})=0$ if and only if $\lim_{\lambda\to+\infty}\Phi(\lambda)=+\infty$.
By differentiating both sides $N$ times and multiplying by $(-1)^N$, we have 
\begin{align}\label{Laplace-U-differential}
	\int_{[0,+\infty)} x^N e^{-zx}\,U(dx)
	=
	\bigg(-\frac{d}{dz}\bigg)^N \frac{1}{\Phi(z)},
	\quad
   \text{$z\in\bC$ with ${\rm Re}\,z>0$, $N\in\bN$}, 
\end{align}
where $\bN=\{1,2,\ldots\}$ denotes the set of positive integers.
The equality \eqref{Laplace-U-differential} plays a significant role in this paper.

For $s>0$, let $T(s)$ be the first passage time of $X$ strictly above $s$, that is,
\begin{align}
   T(s)=\inf\{t\geq0\mid X_t>s\}.	
\end{align}
We denote by $X_{t-}=\lim_{u\to t-} X_u$ the left limit of $X$ at $t\geq 0$. Here it is understood that $X_{0-}=X_0=0$.
Then
\begin{align}
    \bP(X_{T(s)-}\in dx)
    &
    \;(=\bP(X_{T(s)-}\in dx, \; T(s)<\zeta))
    \\
    &=
    \Pi((s-x,+\infty))\,U(dx),
    \quad 0\leq x<s.
    \label{XTy-density}	
\end{align}
See \cite[Proposition III.2]{Ber} and \cite[Theorem 5.6]{Kyp} for the details. The equality \eqref{XTy-density} remains valid for $x=s$ if the drift coefficient $b$ is zero.
If $b>0$, then there exists a potential density $u(x)=U(dx)/dx$ which is positive and continuous on $(0,+\infty)$, and
\begin{align}
	\bP(X_{T(s)-}=s)=\bP(X_{T(s)}=s)=bu(s),
\end{align}
as shown in \cite[Theorems III.4 and III.5]{Ber} and \cite[Theorem 5.9]{Kyp}.

A measurable function $f:(0,+\infty)\to(0,+\infty)$ is said to be regularly varying at $+\infty$ (respectively, at $0+$) with index $\rho\in\bR$ if
\begin{align}
    \lim_{x\to+\infty}\frac{f(\lambda x)}{f(x)}=\lambda^\rho
    \quad\bigg(\text{respectively,} \,\,\lim_{x\to0+}\frac{f(\lambda x)}{f(x)}=\lambda^\rho\bigg),
    \quad 
    \text{for any $\lambda>0$.}	
\end{align}
See \cite{BinGolTeu} for the detail of the theory of regular variation.

Let us recall the Dynkin--Lamperti theorem for killed subordinators. The proof can be found in \cite[Section III.3]{Ber} and \cite[Section 5.5]{Kyp}.

\begin{Thm}[long-range Dynkin--Lamperti theorem]\label{Thm:DL-infty}
Let $\alpha\in(0,1)$. Let $X=(X_t)_{t\geq0}$ be a killed subordinator which is not identically zero. Then the following conditions are equivalent:
\begin{enumerate}[label={\upshape (\arabic*)}]

\item The killing rate $a$ is zero, and the tail $\Pi((x,+\infty))$ of the L\'evy measure  is regularly varying at $+\infty$ with index $-\alpha$.

\item\label{Lap-RV} The Laplace exponent $\Phi:(0,+\infty)\to(0,+\infty)$ (restricted over $(0,+\infty)$) is regularly varying at $0+$ with index $\alpha$. 

\item\label{Thm:DL-infty-1}	
$\displaystyle \lim_{s\to+\infty}\bP\bigg(\frac{X_{T(s)-}}{s} \leq t\bigg)=\frac{\sin(\pi\alpha)}{\pi}\int_0^t x^{-1+\alpha}(1-x)^{-\alpha}\,dx$ for any $0\leq t\leq 1$.

\item 
$\displaystyle \lim_{s\to+\infty} s^{-1}\bE[X_{T(s)-}]=\alpha$.

\item The law of $tX_{1/\Phi(t)}$ converges weakly to a stable law with index $\alpha$, as $t\to0+$. 

\item\label{U-RV} The distribution function $U([0,x])$ of the potential measure  is regularly varying at $+\infty$ with index $\alpha$.  
\end{enumerate}
\end{Thm}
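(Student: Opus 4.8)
The plan is to take condition~(2) --- regular variation of $\Phi$ at $0+$ --- as the hub and route every other equivalence through it, since it is the most tractable analytically. Two elementary identities carry most of the weight:
\begin{align}
\Phi(z)-a-bz&=z\int_0^{+\infty}e^{-zy}\,\Pi\big((y,+\infty)\big)\,dy,
\\
\int_{[0,+\infty)}e^{-zx}\,U(dx)&=\frac{1}{\Phi(z)};
\end{align}
the first comes from~\eqref{Laplace-exp} on writing $1-e^{-zx}=z\int_0^x e^{-zy}\,dy$ and using Fubini, and the second is~\eqref{Laplace-U}. Karamata's Tauberian theorem (\cite{BinGolTeu}) applied to the second identity gives (2)$\Leftrightarrow$(6) at once, since $1/\Phi$ is regularly varying of index $-\alpha$ at $0+$ iff $U([0,x])$ is regularly varying of index $\alpha$ at $+\infty$. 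Applied to the first identity --- together with the monotone density theorem, the integrand $\Pi((y,+\infty))$ being nonincreasing, and the fact that a regularly varying function of index in $(0,1)$ dominates the linear drift term --- it gives (1)$\Leftrightarrow$(2): from $a=0$ and $\Pi((\cdot,+\infty))$ regularly varying of index $-\alpha$ one gets $\Phi(z)-bz$, hence $\Phi(z)$, regularly varying of index $\alpha$; conversely, regular variation of $\Phi$ forces $\Phi(0+)=a=0$ by dominated convergence in~\eqref{Laplace-exp}, and running the same Tauberian/monotone-density argument backwards forces the tail behaviour of $\Pi$. Finally, (2)$\Leftrightarrow$(5) is the continuity theorem for Laplace transforms: by~\eqref{Laplace-trans}, $\bE[\exp(-\lambda tX_{1/\Phi(t)})]=\exp(-\Phi(\lambda t)/\Phi(t))$, which tends as $t\to0+$ to $\exp(-\lambda^\alpha)$ --- the Laplace transform of the one-sided $\alpha$-stable law --- exactly when $\Phi(\lambda t)/\Phi(t)\to\lambda^\alpha$ for all $\lambda>0$; that the limit is an honest stable law also entails $a=0$.

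The substantive implication is (2)$\Rightarrow$(3). Write $\bar\Pi(x):=\Pi((x,+\infty))$. By~\eqref{XTy-density}, for $0<t<1$ (with $x=sy$ in the second step),
\begin{align}
\bP\!\left(\frac{X_{T(s)-}}{s}\le t\right)
&=\int_{[0,ts]}\bar\Pi(s-x)\,U(dx)
\\
&=\bar\Pi(s)\,U([0,s])\int_{[0,t]}\frac{\bar\Pi\big(s(1-y)\big)}{\bar\Pi(s)}\,\frac{U(s\,dy)}{U([0,s])}.
\end{align}
From~(2) and the bridges above, $\bar\Pi$ is regularly varying of index $-\alpha$ at $+\infty$ and $U([0,x])$ of index $\alpha$, with asymptotically reciprocal slowly varying parts; in particular $\bar\Pi(s)\,U([0,s])\to\Gamma(1+\alpha)^{-1}\Gamma(1-\alpha)^{-1}=\sin(\pi\alpha)/(\pi\alpha)$. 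Inside the integral, $\bar\Pi(s(1-y))/\bar\Pi(s)\to(1-y)^{-\alpha}$ while the normalized measure $U(s\,dy)/U([0,s])$ converges weakly on $[0,t]$ to $\alpha y^{-1+\alpha}\,dy$, both by the uniform convergence theorem for regularly varying functions; dominating the first factor near $y=t$ by the Potter bound $C\,(1-y)^{-\alpha-\varepsilon}$ (integrable on $[0,t]$ since $\alpha<1$), one may pass to the limit and obtain $\frac{\sin(\pi\alpha)}{\pi\alpha}\int_0^t(1-y)^{-\alpha}\,\alpha y^{-1+\alpha}\,dy=\frac{\sin(\pi\alpha)}{\pi}\int_0^t y^{-1+\alpha}(1-y)^{-\alpha}\,dy$, which is~(3); this is the classical Dynkin--Lamperti computation, as in~\cite[Section III.3]{Ber}. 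I expect the endpoint analysis near $y=t$ --- combining the weak convergence of $U(s\,dy)/U([0,s])$ with the blow-up of $\bar\Pi(s(1-y))/\bar\Pi(s)$ there --- to be the main obstacle.

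It remains to close the cycle through (3)$\Rightarrow$(4)$\Rightarrow$(2). Since $X_{T(s)-}/s$ takes values in $[0,1]$, the convergence of distribution functions in~(3) upgrades to convergence of the first moment to the mean $\alpha$ of $\mathrm{Beta}(\alpha,1-\alpha)$, which is~(4). For (4)$\Rightarrow$(2): if the killing rate $a>0$ then $\bP(T(s)<\zeta)\to0$ as $s\to+\infty$, whereas $X_{T(s)-}\le s$, so $s^{-1}\bE[X_{T(s)-}]\le\bP(T(s)<\zeta)\to0$, contradicting~(4); hence $a=0$ and $X$ is a genuine subordinator. A computation based on~\eqref{XTy-density}, \eqref{Laplace-U-differential} with $N=1$, and the first identity displayed above (together with $\bP(X_{T(s)-}=s)=bu(s)$ when $b>0$) then yields
\begin{equation}
\int_0^{+\infty}e^{-qs}\,\bE[X_{T(s)-}]\,ds=\frac{\Phi'(q)}{q\,\Phi(q)},\qquad q>0.
\end{equation}
Since $0\le\bE[X_{T(s)-}]\le s$ and $\bE[X_{T(s)-}]/s\to\alpha$ by~(4), dominated convergence gives $\int_0^{+\infty}e^{-qs}\bE[X_{T(s)-}]\,ds\sim\alpha/q^2$ as $q\to0+$, whence $q\,\Phi'(q)/\Phi(q)\to\alpha$; by the Karamata representation this is exactly regular variation of $\Phi$ of index $\alpha$ at $0+$, i.e.~(2). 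Together with the equivalences established above, this proves the theorem.
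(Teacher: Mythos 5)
Your proof is correct, and it reconstructs the standard textbook argument that the paper itself does not reproduce: the paper merely cites Bertoin~\cite[Section III.3]{Ber} and Kyprianou~\cite[Section 5.5]{Kyp}, remarking only that (2)$\Leftrightarrow$(6) follows from the Laplace transform identity for $U$ together with Karamata's Tauberian theorem. Your chain --- (2)$\Leftrightarrow$(6) by Karamata, (1)$\Leftrightarrow$(2) by the integrated-tail identity $\Phi(z)-a-bz=z\int_0^\infty e^{-zy}\Pi((y,+\infty))\,dy$ plus the monotone density theorem, (2)$\Leftrightarrow$(5) by the continuity theorem for Laplace transforms, (2)$\Rightarrow$(3) via the Dynkin--Lamperti substitution in $\bP(X_{T(s)-}\in dx)=\Pi((s-x,+\infty))\,U(dx)$, (3)$\Rightarrow$(4) by bounded convergence, and (4)$\Rightarrow$(2) via the identity $\int_0^\infty e^{-qs}\bE[X_{T(s)-}]\,ds=\Phi'(q)/(q\Phi(q))$ and an Abelian step --- is exactly the standard route. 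One small misdirected worry: you flag a ``blow-up of $\Pi((s(1-y),\infty))/\Pi((s,\infty))$ near $y=t$'' as the main obstacle in (2)$\Rightarrow$(3), but for fixed $t<1$ one has $1-y\geq 1-t>0$ throughout $[0,t]$, so that ratio converges \emph{uniformly} on $[0,t]$ to $(1-y)^{-\alpha}$ with no singularity; the genuine singularity of the Beta density is at $y=1$, which lies outside the integration range, and Potter bounds are not actually needed. The only thing that does require a little care there is pairing the weak convergence of the normalized measures $U(s\,dy)/U([0,s])$ with a uniformly convergent continuous integrand, which your argument already handles.
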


We remark that the equivalence between the conditions \ref{Lap-RV} and \ref{U-RV} follows from \eqref{Laplace-U} and Karamata's Tauberian theorem \cite[Theorems 1.7.1 and 1.7.1']{BinGolTeu}. The following theorem is a short-range version of Theorem \ref{Thm:DL-infty}.

\begin{Thm}[short-range Dynkin--Lamperti theorem]\label{Thm:DL-zero}	
Let $\beta\in(0,1)$. 
Let $X$ be a killed subordinator which is not identically zero.
Then the following conditions are equivalent:
\begin{enumerate}[label={\upshape (\arabic*')}]
\item The drift coefficient $b$ is zero, and the tail $\Pi((x,+\infty))$ of the L\'evy measure  is regularly varying at $0+$ with index $-\beta$.

\item The Laplace exponent $\Phi:(0,+\infty)\to(0,+\infty)$ (restricted over $(0,+\infty)$) is regularly varying at $+\infty$ with index $\beta$. 

\item 	
$\displaystyle \lim_{s\to0+}
\bP\bigg(\frac{X_{T(s)-}}{s} \leq t\bigg)=\frac{\sin(\pi\beta)}{\pi}\int_0^t x^{-1+\beta}(1-x)^{-\beta}\,dx$ for any $0\leq t\leq 1$.
\vspace{3pt}
\item 
$\displaystyle \lim_{s\to0+} s^{-1}\bE[X_{T(s)-}]=\beta$.

\item The law of $t X_{1/\Phi(t)}$ converges weakly to a stable law with index $\beta$, as $t\to+\infty$. 

\item The distribution function $U([0,x])$ of the potential measure  is regularly varying at $0+$ with index $\beta$.  
\end{enumerate}
\end{Thm}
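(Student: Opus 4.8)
The plan is to establish Theorem~\ref{Thm:DL-zero} as the mirror image of Theorem~\ref{Thm:DL-infty}, interchanging the roles of $0+$ and $+\infty$: the arguments of \cite[Section III.3]{Ber} and \cite[Section 5.5]{Kyp} transcribe almost verbatim, so I will only point to the places where the localization at $0$ rather than at $+\infty$ genuinely enters. I would prove the cycle $(2')\Rightarrow(3')\Rightarrow(4')\Rightarrow(2')$ together with the separate equivalences $(1')\Leftrightarrow(2')$, $(2')\Leftrightarrow(6')$ and $(2')\Leftrightarrow(5')$. I note that one cannot simply deduce Theorem~\ref{Thm:DL-zero} from Theorem~\ref{Thm:DL-infty} by inverting the Laplace exponent, since $\lambda\mapsto 1/\Phi(1/\lambda)$ need not itself be a Laplace exponent.

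First, the Tauberian equivalences. For $(2')\Leftrightarrow(6')$ one applies Karamata's Tauberian theorem \cite[Theorems 1.7.1 and 1.7.1']{BinGolTeu} to \eqref{Laplace-U}, now in the form relating the behaviour of $1/\Phi(\lambda)$ as $\lambda\to+\infty$ to that of $U([0,x])$ as $x\to0+$. For $(1')\Leftrightarrow(2')$ I would first note that $b=0$ is forced: by \eqref{Laplace-exp}, $\Phi(\lambda)/\lambda\to b$ as $\lambda\to+\infty$, so $b>0$ would make $\Phi$ regularly varying with index $1\neq\beta$. Granting $b=0$, integration by parts in \eqref{Laplace-exp} gives $\Phi(\lambda)=a+\lambda\int_0^{+\infty}e^{-\lambda x}\,\Pi((x,+\infty))\,dx$; since $\Phi(\lambda)\to+\infty$ the killing rate $a$ is asymptotically negligible, and regular variation of $\Phi$ at $+\infty$ with index $\beta$ becomes equivalent, via Karamata's Tauberian theorem and the monotone density theorem (using monotonicity of $x\mapsto\Pi((x,+\infty))$), to regular variation of $\Pi((x,+\infty))$ at $0+$ with index $-\beta$. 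For $(2')\Leftrightarrow(5')$, by \eqref{Laplace-trans} the variable $tX_{1/\Phi(t)}$ has Laplace transform $\mu\mapsto\exp\big(-\Phi(\mu t)/\Phi(t)\big)$, which converges to $\exp(-\mu^\beta)$ for every $\mu>0$ as $t\to+\infty$ exactly when $\Phi(\mu t)/\Phi(t)\to\mu^\beta$, i.e. when $(2')$ holds.

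Next, the analytic core. For $(2')\Rightarrow(3')$ I would use \eqref{XTy-density}, recalling that $(2')$ forces $b=0$ so that this identity is valid up to $x=s$; hence the law of $X_{T(s)-}/s$ on $[0,1]$ is $\Pi\big((s(1-t),+\infty)\big)\,U(s\,dt)$, with no extra mass at the endpoint $t=1$. Writing the distribution function as an integral in $t$, substituting, and invoking the regular variation at $0+$ of both $\Pi((\cdot,+\infty))$ (index $-\beta$) and $U([0,\cdot])$ (index $\beta$) — together with Potter's bounds (see \cite[Theorem 1.5.6]{BinGolTeu}) to dominate the integrand uniformly in $s$ — one passes to the limit and identifies it, by a Beta-function identity, with $\frac{\sin(\pi\beta)}{\pi}\int_0^t x^{-1+\beta}(1-x)^{-\beta}\,dx$, exactly as in \cite{Ber}. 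The implication $(3')\Rightarrow(4')$ is immediate from bounded convergence, since $X_{T(s)-}/s\in[0,1]$. For $(4')\Rightarrow(2')$ I would write $\bE[X_{T(s)-}]=\int_{[0,s)}x\,\Pi((s-x,+\infty))\,U(dx)$, take Laplace transforms in $s$, and use \eqref{Laplace-U}, \eqref{Laplace-U-differential} and the integration-by-parts identity above to obtain $\int_0^{+\infty}e^{-\lambda s}\,\bE[X_{T(s)-}]\,ds=\Phi'(\lambda)\big(\Phi(\lambda)-a\big)\big/\big(\lambda\,\Phi(\lambda)^2\big)$; then $(4')$ and Karamata's Tauberian theorem give $\lambda\Phi'(\lambda)/\Phi(\lambda)\to\beta$ as $\lambda\to+\infty$, whence $\Phi$ is regularly varying at $+\infty$ with index $\beta$, closing the cycle.

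The step I expect to be the main obstacle is the uniform passage to the limit in $(2')\Rightarrow(3')$: one must control the product $\Pi\big((s(1-t),+\infty)\big)\,U(s\,dt)$ simultaneously near $t=0$, where the local behaviour of $U$ at $0$ matters, and near $t=1$, where the L\'evy tail $\Pi\big((s(1-t),+\infty)\big)$ diverges, and verify that the resulting singularities remain integrable with a bound uniform in $s$. The remaining steps are a faithful transcription of the $+\infty$ case treated in \cite{Ber,Kyp}.
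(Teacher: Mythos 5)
The paper provides no proof of Theorem~\ref{Thm:DL-zero}: it presents the result as a known ``short-range version'' of Theorem~\ref{Thm:DL-infty}, deferring to \cite[Section III.3]{Ber} and \cite[Section 5.5]{Kyp}, and records only that $(2')\Leftrightarrow(6')$ follows from \eqref{Laplace-U} and Karamata's Tauberian theorem. Your proposal carries out precisely that transcription of the long-range argument to the $0+$ setting and is correct, hence consistent with the route the paper intends.
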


\begin{Rem}[regular variation of potential density for special subordinator]
If we further assume that $\Phi:(0,+\infty)\to(0,+\infty)$ is a special Bernstein function in the sense of \cite{SonVon06, BBKRSV, SchSonVon}, then the potential measure $U|_{(0,+\infty)}(dx)$ restricted over $(0,+\infty)$ admits a non-increasing density $u(x)$ $(0<x<+\infty)$. By the monotone density theorem \cite[Theorems 1.7.2 and 1.7.2b]{BinGolTeu}, any one of conditions in Theorem \ref{Thm:DL-infty} is also equivalent to
\begin{enumerate}[label={\upshape (\arabic*)}]
\setcounter{enumi}{6}
\item $u(x)$ is regularly varying at $+\infty$ with index $-1+\alpha$. 	
\end{enumerate}
Similarly, if $\Phi$ is a special Bernstein function, then any one of conditions in Theorem \ref{Thm:DL-zero} is also equivalent to
\begin{enumerate}[label={\upshape (\arabic*')}]
\setcounter{enumi}{6}
\item $U(\{0\})=0$ and $u(x)$ is regularly varying at $0+$ with index $-1+\beta$.
\end{enumerate}
Nevertheless, we will not assume $\Phi$ is a special Bernstein function in our abstract main results.	
\end{Rem}

\section{Main results}\label{sec:main}

Let $f(x), g(x), h(x)$ be functions with $h(x)>0$.
We write $f(x)=g(x)+O(h(x))$ as $x\to a$ if there exists some constant $C\in(0,+\infty)$ such that $|f(x)-g(x)|\leq C h(x)$ on some neighborhood of $a$. We write $f(x)=g(x)+o(h(x))$ as $x\to a$ if $|f(x)-g(x)|/h(x)\to 0$ as $x\to a$. We write $f(x)\sim h(x)$ as $x\to a$ if $f(x)/h(x)\to 1$ as $x\to a$.

In this paper, the functions $\log z$ and $z^w$ ($z\in\bC\setminus(-\infty,0]$, $w\in \bC$) always take the principal values, that is, ${\rm Im}\, \log z\in(-\pi,\pi)$ and $z^w=\exp(w \log z)$.
We denote by
\begin{align}
  \frac{1}{\Gamma(z)}
  =
  \lim_{n\to\infty}\frac{z(z+1)(z+2)\cdots (z+n)}{n! n^z},
  \quad
  z\in\bC,	
\end{align}
the reciprocal gamma function. Note that $1/\Gamma(-n)=0$ for all $n\in\bN\cup\{0\}$.

We now explain our main results. 

\begin{Thm}[asymptotic expansion of potential density at $+\infty$]\label{Thm:potential-infty}
Let $X=(X_t)_{t\geq0}$ be a killed subordinator which is not identically zero. Let $N\in\bN$ be fixed. Assume that the Laplace exponent $\Phi(z)$ satisfies the following two conditions:
\begin{enumerate}[label={\upshape (\Alph*)}]
\item\label{condition:zero} There exist constants $n\in\bN\cup\{0\}$, $c_0\in(0,+\infty)$, $c_1,\dots,c_n\in\bR\setminus\{0\}$ and  $ \alpha_0>\alpha_1>\dots>\alpha_n>-N+1$ with $\alpha_0\in[0,1]$, and a locally integrable function $R:(0,+\infty)\to(0,+\infty)$ such that $R(t)$ is non-increasing on $(0, r)$ for some $r>0$,  $R(t)=o(t^{-N-\alpha_n})$, as $t\to0+$, and
\begin{align}
    \bigg(\frac{d}{dz}\bigg)^{N}
     \bigg(
    \frac{1}{\Phi(z)}
    -
    \sum_{k=0}^n\frac{1}{c_k z^{\alpha_k}}
    \bigg)
    =
    O(R(|z|)),
    \quad
    \text{as $z\to0$ in $\{z\in\bC\mid {\rm Re}\,z>0\}$}.	
\end{align} 

\item\label{condition:bdd}  For any $r>0$, 
\begin{align}
   \sup_{\lambda>0} \int_{\bR\setminus(-r,r)} 
   \bigg|
   \bigg(\bigg(\frac{d}{dz}\bigg)^N
   \frac{1}{\Phi}\bigg)(\lambda+i\theta)\bigg|\,d\theta<+\infty.	
\end{align}
\end{enumerate}
Then, the potential measure $U|_{(0,+\infty)}(dx)$ restricted over $(0,+\infty)$ admits a continuous density $u(x)$ $(0<x<+\infty)$ which can be estimated as
\begin{align}
   u(x)
   =
   \sum_{k=0}^n\frac{x^{-1+\alpha_k}}{c_k\Gamma(\alpha_k)}+O(\varepsilon_1(x)),
   \quad\text{as $x\to+\infty$},
\end{align}
where 
\begin{align}
   \varepsilon_1(x)=x^{-N-1}R(x^{-1})+x^{-N}\int_{x^{-1}}^{1} R(t)\,dt.
\end{align}
\end{Thm}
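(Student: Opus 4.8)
The plan is to invert the Laplace-transform relation \eqref{Laplace-U-differential}. First I would record that, for the given $N$, the function
\[
\psi(z):=\Bigl(-\frac{d}{dz}\Bigr)^{N}\frac{1}{\Phi(z)},\qquad {\rm Re}\,z>0,
\]
is the Laplace transform of the measure $x^{N}\,U(dx)$ on $[0,+\infty)$, which becomes finite upon multiplication by $e^{-\lambda x}$ for every $\lambda>0$ since its total mass is $\psi(\lambda)<+\infty$. The candidate principal part comes from inverting the ansatz in \ref{condition:zero} termwise: since $\alpha_{k}>-N+1$, each function $x\mapsto x^{N-1+\alpha_{k}}/(c_{k}\Gamma(\alpha_{k}))$ is continuous on $[0,+\infty)$ and of polynomial growth, with Laplace transform $\Gamma(N+\alpha_{k})/(c_{k}\Gamma(\alpha_{k})z^{N+\alpha_{k}})=\frac{1}{c_{k}}\bigl(-\frac{d}{dz}\bigr)^{N}z^{-\alpha_{k}}$, the convention $1/\Gamma(-m)=0$ making the terms with $\alpha_{k}$ a non-positive integer vanish consistently on both sides. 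Thus, writing
\[
w(x):=\sum_{k=0}^{n}\frac{x^{-1+\alpha_{k}}}{c_{k}\Gamma(\alpha_{k})},\qquad
g(x):=x^{N}w(x),\qquad
\phi(z):=\Bigl(-\frac{d}{dz}\Bigr)^{N}\sum_{k=0}^{n}\frac{1}{c_{k}z^{\alpha_{k}}},
\]
the function $\phi$ is the Laplace transform of $g(x)\,dx$, and condition \ref{condition:zero} says exactly that $\Psi(z):=\psi(z)-\phi(z)=O(R(|z|))$ as $z\to0$ with ${\rm Re}\,z>0$. The whole statement then reduces to proving that the signed measure $\mu(dx):=x^{N}\,U(dx)-g(x)\,dx$ has a continuous density $\rho$ on $[0,+\infty)$ with $\rho(x)=O\bigl(x^{-1}R(x^{-1})+\int_{x^{-1}}^{1}R(t)\,dt\bigr)$ as $x\to+\infty$: then $u(x):=w(x)+x^{-N}\rho(x)$ is a continuous density of $U|_{(0,+\infty)}$ and $u(x)-w(x)=O(\varepsilon_{1}(x))$.

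Next I would invert $\Psi$ by Fourier inversion. For each $\lambda>0$ the measure $e^{-\lambda x}\mu(dx)$ is finite and signed, with Fourier transform $\theta\mapsto\Psi(\lambda+i\theta)$, and I claim $\Psi(\lambda+i\cdot)\in L^{1}(\bR)$: fixing any $r>0$, condition \ref{condition:bdd} controls $\psi(\lambda+i\theta)$ on $\{|\theta|>r\}$ uniformly in $\lambda$, the bound $|\phi(\lambda+i\theta)|\le\sum_{k}C_{k}|\theta|^{-N-\alpha_{k}}$ is integrable there because $N+\alpha_{k}>1$, and on the compact segment $\{|\theta|\le r\}$ both $\psi$ and $\phi$ are continuous (analytic on $\{{\rm Re}\,z>0\}$). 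Hence $\mu$ has a continuous density $\rho$; applying the inversion to $e^{-y/x}\mu(dy)$ and using uniqueness of continuous densities, evaluation at $y=x$ gives, for every $x>0$,
\[
\rho(x)=\frac{e}{2\pi}\int_{\bR}e^{i\theta x}\,\Psi\!\left(\tfrac{1}{x}+i\theta\right)d\theta ,
\]
so that $U|_{(0,+\infty)}$ admits the continuous density $u(x)=w(x)+x^{-N}\rho(x)$.

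Then I would estimate this integral by splitting at $|\theta|=r$, fixing $r\in(0,1]$ small enough that the $O$-bound of \ref{condition:zero} and the monotonicity of $R$ both hold on $\{|z|\le r\sqrt{2}\}$. On $\{|\theta|>r\}$ the integrand is at most $|\psi(x^{-1}+i\theta)|+\sum_{k}C_{k}|\theta|^{-N-\alpha_{k}}$, whose integral is a finite constant by \ref{condition:bdd}. On $\{|\theta|\le r\}$, for $x$ large the integrand is at most $C\,R\bigl(\sqrt{x^{-2}+\theta^{2}}\bigr)$; splitting again at $|\theta|=x^{-1}$ and using monotonicity of $R$, the range $|\theta|\le x^{-1}$ contributes at most $2C\,x^{-1}R(x^{-1})$ (since $\sqrt{x^{-2}+\theta^{2}}\ge x^{-1}$), and the range $x^{-1}<|\theta|\le r$ at most $2C\int_{x^{-1}}^{1}R(t)\,dt$ (since $\sqrt{x^{-2}+\theta^{2}}\ge|\theta|$ and $r\le1$). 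Thus $|\rho(x)|\le C_{1}\bigl(x^{-1}R(x^{-1})+\int_{x^{-1}}^{1}R(t)\,dt\bigr)+C_{2}$. Finally, since $R$ is positive and locally integrable, $x^{N}\varepsilon_{1}(x)=x^{-1}R(x^{-1})+\int_{x^{-1}}^{1}R(t)\,dt\ge\int_{1/2}^{1}R(t)\,dt>0$ for $x\ge2$, so the constant $C_{2}$ is absorbed: $|\rho(x)|\le C_{3}\,x^{N}\varepsilon_{1}(x)$, i.e. $u(x)=w(x)+O(\varepsilon_{1}(x))$ as $x\to+\infty$.

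The bookkeeping with the degenerate exponents and the Laplace-transform identities is routine. The main obstacle will be the choice of the moving inversion line ${\rm Re}\,z=1/x$ together with the further split of the integral at $|\theta|\sim1/x$ using the monotonicity of $R$ near $0$: this is exactly what produces the sharp two-term error $\varepsilon_{1}$ rather than a coarser estimate, and it is the step where the hypotheses are used at full strength. A secondary technical point --- sidestepped by applying Fourier inversion separately on each line ${\rm Re}\,z=1/x$ and invoking uniqueness of continuous densities --- is that one never needs uniform-in-$\lambda$ inversion bounds or contour shifting.
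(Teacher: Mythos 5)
Your proposal is correct and follows essentially the same route as the paper: apply Fourier inversion on the moving vertical line $\operatorname{Re}z=1/x$ (this is the paper's Lemma~\ref{Lem:potential} with $\lambda=x^{-1}$), subtract the termwise inverse of the principal part $\sum_k c_k^{-1}z^{-\alpha_k}$, and estimate the residual by splitting the $\theta$-integral first at $|\theta|=r$ (condition~\ref{condition:bdd} plus integrability of $|\theta|^{-N-\alpha_k}$) and then at $|\theta|=x^{-1}$ using the monotonicity of $R$ near $0$. The only cosmetic differences are that you package the subtraction as a signed measure $\mu=x^N U(dx)-g(x)\,dx$ with density $\rho$ instead of subtracting inside the integrand, and that you make explicit the final absorption of the additive $O(1)$ from $|\theta|>r$ into $x^N\varepsilon_1(x)$ (using $R>0$), a step the paper leaves implicit.
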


The proof of Theorem \ref{Thm:potential-infty} will be given in Section \ref{sec:Proof-infty}.

\begin{Rem}\label{Rem:Pi-infty}
The condition \ref{condition:zero} with $\alpha_0=\alpha\in(0,1]$ yields that
\begin{align}
	\Phi(\lambda)\sim 
	c_0\lambda^\alpha,
	\quad
	\text{as $\lambda\to 0+$,} 
\end{align}
and hence the killing rate must be zero.
In addition, if $\alpha\in(0,1)$, then we use Karamata's Tauberian theorem and the monotone density theorem to obtain
\begin{align}
   \Pi((x,+\infty))\sim \frac{c_0x^{-\alpha}}{\Gamma(1-\alpha)},
   \quad
   \text{as $x\to+\infty$}.	
\end{align}
\end{Rem}

\begin{Rem}
For example, if $R(t)=t^{-\gamma}$ for some $\gamma\in [0,N+\alpha_n)$, then
\begin{align}
    \varepsilon_1(x) 
    =
   \begin{cases}
   	O(x^{-N-1+\gamma}), &1<\gamma<N+\alpha_n,
   	\\[3pt]
   	O(x^{-N}\log x),
   	& \gamma=1,
   	\\[3pt]
   	O(x^{-N}),
   	&0\leq\gamma<1,
   \end{cases}
   \quad \text{as $x\to+\infty$.}
\end{align}	
\end{Rem}

Combining Theorem \ref{Thm:potential-infty} with the equality \eqref{XTy-density} and Euler's reflection formula, we obtain the following theorem immediately.

\begin{Thm}[higher order approximation in long-range Dynkin--Lamperti theorem]\label{Thm:conv-DL-infty}
Under the assumptions of Theorem $\ref{Thm:potential-infty}$ with  $\alpha_0=\alpha\in[0,1]$, let us define a function $\varepsilon_2:(0,+\infty)\to\bR$ by
\begin{align}
    \Pi((x,+\infty))=
    \mathbbm{1}_{\{\alpha>0\}}
    \frac{c_0 x^{-\alpha}}{\Gamma(1-\alpha)}+\varepsilon_2(x),
    \quad 0<x<+\infty.	
\end{align}
Set
\begin{align}
   \varepsilon_3(s,x)
   =
   s\Pi((s-sx,+\infty))\varepsilon_1(sx),
   \quad 0<s, x<+\infty.	
\end{align}
Then, for $s\in(0,+\infty)$, the finite measure 
\begin{align}
\bP\bigg(\frac{X_{T(s)-}}{s}\in dx,\; 0<\frac{X_{T(s)-}}{s}<1\bigg),
\quad
0<x<1,
\end{align}
 admits a density expressed in the form
\begin{align}
   s\Pi(&(s-sx,+\infty))u(sx)
   \\
   =\,&\frac{\sin(\pi \alpha)}{\pi}x^{-1+\alpha}(1-x)^{-\alpha}
   +
    \mathbbm{1}_{\{\alpha>0\}}
   \sum_{k=1}^n
   \frac{c_0 s^{\alpha_k-\alpha}}{c_k\Gamma(\alpha_k)\Gamma(1-\alpha)}x^{-1+\alpha_k}(1-x)^{-\alpha}
   \\
   &+\sum_{k=0}^n\frac{s^{\alpha_k}}{c_k\Gamma(\alpha_k)}
   x^{-1+\alpha_k}\varepsilon_2(s-sx)
   +
   O(\varepsilon_3(s,x)),
   \quad
   \text{as $s\to+\infty$,}
   \label{Thm:conv-DL-infty-1}
\end{align}
uniformly in $x\in I$ for any closed interval $I\subset (0,1)$. 
Therefore, for any closed interval $I\subset (0,1)$,
\begin{align}
	\bP\bigg(&\frac{X_{T(s)-}}{s}\in I\bigg)
	\\
	=
	\,&\frac{\sin(\pi \alpha)}{\pi}\int_I x^{-1+\alpha}(1-x)^{-\alpha}\,dx
	\\
	&+
	 \mathbbm{1}_{\{\alpha>0\}}
	\sum_{k=1}^n
   \frac{c_0 s^{\alpha_k-\alpha}}{c_k\Gamma(\alpha_k)\Gamma(1-\alpha)}
   \int_I x^{-1+\alpha_k}(1-x)^{-\alpha}\,dx
   \\
   &+\sum_{k=0}^n\frac{s^{\alpha_k}}{c_k\Gamma(\alpha_k)}
   \int_I
   x^{-1+\alpha_k}\varepsilon_2(s-sx)\,dx
   +
  O\bigg(\sup_{x\in I}\varepsilon_3(s,x)\bigg),
   \quad
   \text{as $s\to+\infty$}.
   \label{Thm:conv-DL-infty-2}
\end{align}

\begin{Rem}
By Remark \ref{Rem:Pi-infty}, if $0<\alpha<1$, then $\varepsilon_2(x)=o(x^{-\alpha})$, as $x\to+\infty$.	
\end{Rem}

\end{Thm}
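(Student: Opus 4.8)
The plan is to treat Theorem \ref{Thm:conv-DL-infty} as an essentially formal consequence of Theorem \ref{Thm:potential-infty}, the identity \eqref{XTy-density}, and Euler's reflection formula, so the work is in organizing the algebra and controlling the error terms uniformly on closed subintervals of $(0,1)$. First, I would invoke Theorem \ref{Thm:potential-infty}: under the stated hypotheses the potential measure restricted to $(0,+\infty)$ has a continuous density $u$ satisfying $u(y) = \sum_{k=0}^n \frac{y^{-1+\alpha_k}}{c_k\Gamma(\alpha_k)} + O(\varepsilon_1(y))$ as $y\to+\infty$. Since $b=0$ by Remark \ref{Rem:Pi-infty} (as $\alpha_0=\alpha\in[0,1]$ forces the killing rate and, when $\alpha\in(0,1)$, we are in the non-drift regime; one should check the $\alpha=1$ and $\alpha=0$ edge cases separately, but in all cases \eqref{XTy-density} identifies the relevant measure), the measure $\bP(X_{T(s)-}/s\in dx,\ 0<X_{T(s)-}/s<1)$ has, after the change of variables $y=sx$, the density $s\,\Pi((s-sx,+\infty))\,u(sx)$ on $(0,1)$. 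This is the claimed density; the remaining task is to expand it.

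Next I would substitute the two expansions. For the tail factor, write $\Pi((s-sx,+\infty)) = \mathbbm{1}_{\{\alpha>0\}}\frac{c_0(s-sx)^{-\alpha}}{\Gamma(1-\alpha)} + \varepsilon_2(s-sx)$ by the definition of $\varepsilon_2$; for the potential density, write $u(sx) = \sum_{k=0}^n \frac{(sx)^{-1+\alpha_k}}{c_k\Gamma(\alpha_k)} + O(\varepsilon_1(sx))$. Multiplying out and collecting the factors of $s$: the product of the leading tail term with the $k$-th potential term contributes $\mathbbm{1}_{\{\alpha>0\}}\frac{c_0}{c_k\Gamma(\alpha_k)\Gamma(1-\alpha)} s^{1-\alpha}(1-x)^{-\alpha}s^{-1+\alpha_k}x^{-1+\alpha_k} = \mathbbm{1}_{\{\alpha>0\}}\frac{c_0 s^{\alpha_k-\alpha}}{c_k\Gamma(\alpha_k)\Gamma(1-\alpha)} x^{-1+\alpha_k}(1-x)^{-\alpha}$, where the $k=0$ term, using $\alpha_0=\alpha$ and Euler's reflection formula $\Gamma(\alpha)\Gamma(1-\alpha)=\pi/\sin(\pi\alpha)$, becomes exactly $\frac{\sin(\pi\alpha)}{\pi}x^{-1+\alpha}(1-x)^{-\alpha}$; this gives the first two lines of \eqref{Thm:conv-DL-infty-1}, and one notes the whole block vanishes when $\alpha=0$. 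The product of the $\varepsilon_2$ term with the $k$-th potential term gives $\sum_{k=0}^n \frac{s^{\alpha_k}}{c_k\Gamma(\alpha_k)} x^{-1+\alpha_k}\varepsilon_2(s-sx)$, the third line. Finally, the product of the tail factor with the $O(\varepsilon_1(sx))$ error is $O\bigl(s\,\Pi((s-sx,+\infty))\,\varepsilon_1(sx)\bigr) = O(\varepsilon_3(s,x))$ by the definition of $\varepsilon_3$ — here one checks the implicit constant from Theorem \ref{Thm:potential-infty} is absolute, and that $s\,\Pi((s-sx,+\infty))$ is bounded on $I$ (it is, since $s-sx$ ranges over an interval bounded away from $0$ and $+\infty$ in $s$-scale and $\Pi$ is regularly varying), so the big-$O$ is uniform in $x\in I$.

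The uniformity over a closed interval $I\subset(0,1)$ is the one point requiring care rather than pure bookkeeping: one must verify that for $x\in I$ the argument $sx$ of $u$ tends to $+\infty$ uniformly (true since $\inf I>0$), so the error estimate from Theorem \ref{Thm:potential-infty} applies with a single constant; likewise $s-sx = s(1-x)$ with $1-x\geq 1-\sup I>0$ tends to $+\infty$ uniformly, so the asymptotics of $\Pi((\cdot,+\infty))$ underlying $\varepsilon_2=o(x^{-\alpha})$ hold uniformly. Given this, \eqref{Thm:conv-DL-infty-1} follows, and \eqref{Thm:conv-DL-infty-2} is obtained by integrating \eqref{Thm:conv-DL-infty-1} over $I$: each explicit term integrates term-by-term, the $\varepsilon_2$ terms integrate to $\sum_{k}\frac{s^{\alpha_k}}{c_k\Gamma(\alpha_k)}\int_I x^{-1+\alpha_k}\varepsilon_2(s-sx)\,dx$, and the $O(\varepsilon_3(s,x))$ term integrates to $O\bigl(\sup_{x\in I}\varepsilon_3(s,x)\bigr)$ since $I$ has finite length. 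The integrability of $x^{-1+\alpha_k}(1-x)^{-\alpha}$ over $I$ is automatic as $I$ is compactly contained in $(0,1)$. I do not anticipate a genuine obstacle here — the theorem is a corollary — but the bookkeeping of which power of $s$ attaches to which cross-term, and the edge cases $\alpha\in\{0,1\}$ in the interpretation of the indicator $\mathbbm{1}_{\{\alpha>0\}}$ and of \eqref{XTy-density}, are where an error could slip in and should be written out explicitly.
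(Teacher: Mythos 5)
Your proposal is correct and is exactly the argument the paper intends (the paper declares the theorem ``immediate'' from Theorem \ref{Thm:potential-infty}, \eqref{XTy-density}, and Euler's reflection formula, and gives no further proof). One small misstatement worth fixing: $s\,\Pi((s-sx,+\infty))$ is \emph{not} bounded as $s\to+\infty$ (for $\alpha\in(0,1)$ it grows like $s^{1-\alpha}$), but this is irrelevant --- the error term $\varepsilon_3(s,x)$ is \emph{defined} as $s\,\Pi((s-sx,+\infty))\varepsilon_1(sx)$, and the uniformity of the $O(\varepsilon_3(s,x))$ bound over $x\in I$ follows simply from $\inf I>0$ (so $sx\to+\infty$ uniformly and the single constant from Theorem \ref{Thm:potential-infty} applies); similarly, $b=0$ is neither forced by the hypotheses nor needed, since restricting to $0<X_{T(s)-}/s<1$ keeps you off the endpoint $x=s$ where \eqref{XTy-density} would require it.
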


Similarly, we can also obtain short-range versions of Theorems \ref{Thm:potential-infty} and \ref{Thm:conv-DL-infty}, as we shall see below.

\begin{Thm}[asymptotic expansion of potential density at $0+$]\label{Thm:potential-zero}
Let $X=(X_t)_{t\geq0}$ be a killed subordinator which is not identically zero. Assume that the Laplace exponent $\Phi(z)$ satisfies the following condition:
\begin{enumerate}[label={\upshape (\Alph*')}]
\item\label{condition:infty} There exist constants $n\in\bN\cup\{0\}$, $c_0\in(0,+\infty)$, $c_1,\dots,c_n\in\bR\setminus\{0\}$ and $\beta_0<\beta_1<\dots<\beta_n<+\infty$ with $\beta_0\in(0,1]$, and a locally integrable function $R:(0,+\infty)\to(0,+\infty)$ such that $R(t)$ is non-increasing on $(r,+\infty)$ for some $r>0$, $R(t)=o(t^{-1-\beta_n})$, as $t\to+\infty$, and
\begin{align}
    \frac{d}{dz}
    \bigg(
    \frac{1}{\Phi(z)}
    -
    \sum_{k=0}^n
    \frac{1}{c_k z^{\beta_k}}
    \bigg)
    =
    O(R(|z|)),
    \quad
    \text{as $z\to\infty$ in $\{z\in\bC\mid {\rm Re}\,z>0\}$}.	
\end{align}
\end{enumerate}
Then the potential measure $U|_{(0,+\infty)}$ restricted over $(0,+\infty)$ admits a continuous density $u(x)$ $(0<x<+\infty)$ which can be estimated as
\begin{align}
   u(x)
   =
   \sum_{k=0}^n
   \frac{x^{-1+\beta_k}}{c_k\Gamma(\beta_k)}+O(\varepsilon_1(x)),
   \quad\text{as $x\to0+$,}
\end{align}
where
\begin{align}
    \varepsilon_1(x)
    =
    x^{-2}R(x^{-1})+x^{-1}\int_{x^{-1}}^{+\infty} R(t)\,dt.	
\end{align}
\end{Thm}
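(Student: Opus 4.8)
The plan is to mimic the proof of Theorem \ref{Thm:potential-infty} (the long-range case) but with the roles of $0$ and $+\infty$ interchanged, so that the relevant asymptotics of $1/\Phi$ are now taken as $z\to\infty$ in the right half-plane rather than as $z\to 0$. The strategy is a Fourier/Laplace inversion argument: start from \eqref{Laplace-U}, namely $\int_{[0,+\infty)}e^{-zx}\,U(dx)=1/\Phi(z)$, valid for ${\rm Re}\,z>0$. First I would justify that $U|_{(0,+\infty)}$ has a continuous density: this should follow from condition \ref{condition:infty} together with integrability of $(d/dz)(1/\Phi)$ along vertical lines, via an inversion formula of the form $u(x)=\frac{1}{2\pi}\int_{\bR}e^{(\lambda+i\theta)x}\,\frac{1}{\Phi(\lambda+i\theta)}\,d\theta$ (or its once-differentiated variant, integrating $(d/dz)(1/\Phi)$ against $e^{zx}/x$ and using $\int_\bR |(1/\Phi)'(\lambda+i\theta)|\,d\theta<\infty$, which is what $R(t)=o(t^{-1-\beta_n})$ plus the $n=0$ principal term's decay buys us near infinity, and smoothness of $\Phi$ away from $0$ handles the bounded range). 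In contrast to the $+\infty$ case, here the near-$0$ behaviour of $\theta$ is harmless because $1/\Phi$ is smooth there, and we do not need an analogue of hypothesis \ref{condition:bdd}.

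Next, I would split $1/\Phi$ into its principal part $\sum_{k=0}^n (c_k z^{\beta_k})^{-1}$ and the remainder $Q(z):=1/\Phi(z)-\sum_{k=0}^n(c_k z^{\beta_k})^{-1}$. For each principal term, the inversion of $z^{-\beta_k}$ is classical: the function $x\mapsto x^{-1+\beta_k}/(\Gamma(\beta_k))$ has Laplace transform $z^{-\beta_k}$ (with the convention $1/\Gamma(-m)=0$ killing any would-be negative-integer anomalies), so these terms contribute exactly $\sum_{k=0}^n x^{-1+\beta_k}/(c_k\Gamma(\beta_k))$, which is the claimed leading expansion. The real work is estimating the contribution of $Q$ to $u(x)$ as $x\to 0+$. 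Here I would use the derivative bound $Q'(z)=O(R(|z|))$ together with a contour/scaling argument: write the $Q$-contribution as an inverse Laplace transform, integrate by parts once to bring in $Q'$, and then choose the vertical line ${\rm Re}\,z=\lambda\asymp x^{-1}$ and estimate $\int_\bR |Q'(\lambda+i\theta)|\,e^{\lambda x}\,d\theta$ by splitting the $\theta$-integral at $|\theta|\asymp x^{-1}$; the monotonicity of $R$ on $(r,+\infty)$ lets one dominate the tail by $\int_{x^{-1}}^{\infty}R(t)\,dt$ while the central part contributes the $x^{-2}R(x^{-1})$ term, yielding $O(\varepsilon_1(x))$ with $\varepsilon_1(x)=x^{-2}R(x^{-1})+x^{-1}\int_{x^{-1}}^{\infty}R(t)\,dt$.

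I expect the main obstacle to be the rigorous justification of the inversion formula and the interchange of integration/differentiation near the boundary ${\rm Re}\,z=0$: one must make sure the contour can be pushed to the imaginary axis (or kept at ${\rm Re}\,z=\lambda$ and the dependence on $\lambda$ controlled), that the integrals converge absolutely, and that the error bounds are \emph{uniform} in the way needed to conclude $u(x)=\sum_k x^{-1+\beta_k}/(c_k\Gamma(\beta_k))+O(\varepsilon_1(x))$. Since all of this machinery will already have been developed in Section \ref{sec:Proof-infty} for Theorem \ref{Thm:potential-infty}, the cleanest route is to phrase the present proof as a reduction: apply (a right-half-plane, reflected version of) the key lemma behind Theorem \ref{Thm:potential-infty} to the function $z\mapsto 1/\Phi(1/z)$ or, more simply, re-run the same estimates observing that every step that used "$z\to 0$, $R$ non-increasing near $0$, $R(t)=o(t^{-N-\alpha_n})$" has a verbatim counterpart with "$z\to\infty$, $R$ non-increasing near $\infty$, $R(t)=o(t^{-1-\beta_n})$" and $N=1$. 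I would therefore only spell out the points where the two arguments genuinely differ — chiefly the absence of the hypothesis \ref{condition:bdd} and the corresponding simplification near $\theta=0$ — and defer the remaining computations to the proof of Theorem \ref{Thm:potential-infty}.
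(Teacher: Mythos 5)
Your proposal is correct and follows essentially the same route as the paper: the paper also invokes Lemma \ref{Lem:potential} with $N=1$ (so that the kernel is $(d/dz)(1/\Phi)$, i.e.\ exactly $\Phi'/\Phi^2$), checks integrability along vertical lines directly from \ref{condition:infty} since $R(t)=o(t^{-1-\beta_n})$ and $1+\beta_0>1$ make the integrand absolutely integrable with no need for an analogue of \ref{condition:bdd}, sets $\lambda=x^{-1}$, inverts the gamma-type principal terms $z^{-\beta_k}$ to $x^{-1+\beta_k}/\Gamma(\beta_k)$, and bounds the remainder by splitting the $\theta$-integral of $R(|x^{-1}+i\theta|)$ at $|\theta|=x^{-1}$ using $|x^{-1}+i\theta|\geq\max(x^{-1},|\theta|)$ and the monotonicity of $R$, yielding $x^{-2}R(x^{-1})+x^{-1}\int_{x^{-1}}^{\infty}R(t)\,dt$. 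The only minor inaccuracy in your heuristic is the reason \ref{condition:bdd} is dispensable: it is not smoothness of $1/\Phi$ near $\theta=0$ (true but not the point) but rather that for $\lambda=x^{-1}>r$ one has $|\lambda+i\theta|>r$ for \emph{all} $\theta$, so the hypothesis \ref{condition:infty} already controls the entire vertical line.
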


We will prove Theorem \ref{Thm:potential-zero} in Section \ref{sec:Proof-infty}.

\begin{Rem}\label{Rem:Pi-zero}
The condition \ref{condition:infty} with $\beta_0=\beta\in(0,1]$ yields that
\begin{align}
	\Phi(\lambda)\sim 
	c_0\lambda^\beta,
	\quad
	\text{as $\lambda\to +\infty$}. 
\end{align}
Let us further assume $\beta\in(0,1)$. Then the drift coefficient must be zero. In addition, we use Karamata's Tauberian theorem and the monotone density theorem to obtain
\begin{align}
   \Pi((x,+\infty))\sim \frac{c_0x^{-\beta}}{\Gamma(1-\beta)},
   \quad
   \text{as $x\to 0+$}.	
\end{align}
\end{Rem}

\begin{Rem}
For example, if $R(t)=t^{-\gamma}$ for some $\gamma\in(1+\beta_n,+\infty)$, then
\begin{align}
    \varepsilon_1(x)=O(x^{-2+\gamma}), 
    \quad \text{as $x\to0+$.}	
\end{align}
\end{Rem}

The following theorem immediately follows from Theorem \ref{Thm:potential-zero} and the equality \eqref{XTy-density}.

\begin{Thm}[higher order approximation in short-range Dynkin--Lamperti theorem]
\label{Thm:conv-DL-zero}
Under the assumption of Theorem $\ref{Thm:potential-zero}$ with $\beta_0=\beta\in(0,1]$,  let us define a function $\varepsilon_2:(0,+\infty)\to\bR$ by 
\begin{align}
    \Pi((x,+\infty))=
    \frac{c_0 x^{-\beta}}{\Gamma(1-\beta)}+\varepsilon_2(x),
    \quad
   0<x<+\infty.	
\end{align}
Set 
\begin{align}
 \varepsilon_3(s,x)
   =
   s\Pi((s-sx,+\infty))\varepsilon_1(sx),
   \quad 0<s, x<+\infty.	
\end{align}
Then, for $s\in(0,+\infty)$, the finite measure 
\begin{align}
\bP\bigg(\frac{X_{T(s)-}}{s}\in dx,\; 0<\frac{X_{T(s)-}}{s}<1\bigg),
\quad 0<x<1,
\end{align}
 admits a density expressed in the form
\begin{align}
   s\Pi((s-sx,&+\infty))u(sx)
   \\
   =\,&
   \frac{\sin(\pi\beta)}{\pi}x^{-1+\beta}(1-x)^{-\beta}
   +
   \sum_{k=1}^n \frac{c_0 s^{\beta_k-\beta}}{c_k\Gamma(\beta_k)\Gamma(1-\beta)}
   x^{-1+\beta_k}(1-x)^{-\beta}
   \\
   &+
   \sum_{k=0}^n \frac{s^{\beta_k}}{c_k\Gamma(\beta_k)}
   x^{-1+\beta_k}\varepsilon_2(s-sx)
   +
   O(\varepsilon_3(s,x)),
   \quad
   \text{as $s\to0+$,}	
\end{align}
uniformly in $x\in I$ for any closed interval $I\subset (0,1)$. 
Therefore, for any closed interval $I\subset (0,1)$,
\begin{align}
	&\bP\bigg(\frac{X_{T(s)-}}{s}\in I\bigg)
	\\
	=\,&
	 \frac{\sin(\pi\beta)}{\pi}
	 \int_I x^{-1+\beta}(1-x)^{-\beta}\,dx
	 +
	 \sum_{k=1}^n \frac{c_0 s^{\beta_k-\beta}}{c_k\Gamma(\beta_k)\Gamma(1-\beta)}
   \int_I x^{-1+\beta_k}(1-x)^{-\beta}\,dx 
   \\	
   &+
   \sum_{k=0}^n \frac{s^{\beta_k}}{c_k\Gamma(\beta_k)}
   \int_I
   x^{-1+\beta_k}\varepsilon_2(s-sx)\,dx
   + 
   O\bigg(\sup_{x\in I}\varepsilon_3(s,x)\bigg),
   \quad
   \text{as $s\to0+$}.
\end{align}
\end{Thm}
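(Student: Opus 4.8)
The plan is to combine the identity \eqref{XTy-density} with Theorem \ref{Thm:potential-zero} to write the density of $X_{T(s)-}/s$ on $(0,1)$ in closed form, and then simply substitute the expansions and multiply out. By \eqref{XTy-density}, for $0\le x<s$ we have $\bP(X_{T(s)-}\in dx)=\Pi((s-x,+\infty))\,U(dx)$, and the hypothesis of the present theorem is precisely condition \ref{condition:infty}, so Theorem \ref{Thm:potential-zero} applies and $U|_{(0,+\infty)}$ has a continuous density $u$. Hence on $0<x<s$ the law of $X_{T(s)-}$ has density $\Pi((s-x,+\infty))u(x)$, and pushing this forward under $x\mapsto x/s$ shows that the finite measure $\bP(X_{T(s)-}/s\in dx,\,0<X_{T(s)-}/s<1)$ has density $g_s(x):=s\Pi((s-sx,+\infty))u(sx)$ on $(0,1)$. (No assumption on the drift is needed here, since \eqref{XTy-density} is only invoked for $x<s$; in the degenerate case $\beta=1$ with positive drift the resulting atom of $X_{T(s)-}$ at $s$ simply lies outside the range $(0,1)$.) Thus the first displayed assertion of the theorem will follow as soon as $g_s$ is expanded.

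Fix a closed interval $I=[a,b]\subset(0,1)$. For $x\in I$, as $s\to0+$ both $sx$ and $s-sx=s(1-x)$ tend to $0$, uniformly in $x$. First I would apply Theorem \ref{Thm:potential-zero} to $u(sx)$ and write
\begin{align}
  g_s(x)
  =
  s\Pi((s-sx,+\infty))\sum_{k=0}^n\frac{(sx)^{-1+\beta_k}}{c_k\Gamma(\beta_k)}
  +
  s\Pi((s-sx,+\infty))\cdot O(\varepsilon_1(sx)),
\end{align}
where \emph{keeping $\Pi$ unsplit} in the second summand makes it literally $O(\varepsilon_3(s,x))$ by the definition of $\varepsilon_3$. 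In the first summand I would substitute $\Pi((s-sx,+\infty))=c_0(s-sx)^{-\beta}/\Gamma(1-\beta)+\varepsilon_2(s-sx)$ and expand the resulting finite, error-free product. The contribution of $c_0(s-sx)^{-\beta}/\Gamma(1-\beta)$ produces, for $k=0$ (recall $\beta_0=\beta$), the term $x^{-1+\beta}(1-x)^{-\beta}/(\Gamma(\beta)\Gamma(1-\beta))=\sin(\pi\beta)\,x^{-1+\beta}(1-x)^{-\beta}/\pi$ by Euler's reflection formula, and for $k\ge1$ the terms $c_0s^{\beta_k-\beta}x^{-1+\beta_k}(1-x)^{-\beta}/(c_k\Gamma(\beta_k)\Gamma(1-\beta))$; the contribution of $\varepsilon_2(s-sx)$ produces the terms $s^{\beta_k}x^{-1+\beta_k}\varepsilon_2(s-sx)/(c_k\Gamma(\beta_k))$ for $k=0,\dots,n$. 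Collecting these reproduces exactly the claimed expansion of $s\Pi((s-sx,+\infty))u(sx)$.

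The one point requiring care is uniformity of the $O$-terms in $x\in I$: the bound $|u(y)-\sum_k y^{-1+\beta_k}/(c_k\Gamma(\beta_k))|\le C\varepsilon_1(y)$ from Theorem \ref{Thm:potential-zero} holds on some $(0,\delta)$, and since $\sup_{x\in I}sx\le bs\to0$ it applies with $y=sx$ and the same $C$ for all small $s$, uniformly in $x\in I$; the prefactors $x^{-1+\beta_k}$ and $(1-x)^{-\beta}$ are bounded on $I$. Integrating the expansion over $I$, using $\int_I O(\varepsilon_3(s,x))\,dx\le|I|\sup_{x\in I}\varepsilon_3(s,x)$ and the identity $\bP(X_{T(s)-}/s\in I)=\int_I g_s(x)\,dx$ (valid since $I\subset(0,1)$), yields the second displayed assertion. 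I do not expect a serious obstacle here: the argument is substitution plus bookkeeping once Theorem \ref{Thm:potential-zero} is available, the only modest subtleties being (i) keeping $\Pi((s-sx,+\infty))$ unsplit where it multiplies the $O(\varepsilon_1(sx))$ error, (ii) the uniform passage from the one-variable asymptotics to estimates on $I$, and (iii) the case $\beta=1$, where $1/\Gamma(1-\beta)=0$ kills the leading and cross terms and the content lies entirely in the $\varepsilon_2$-sum, consistent with the conventions $1/\Gamma(0)=1/\Gamma(-m)=0$ already fixed in the paper.
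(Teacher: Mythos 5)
Your proposal is correct and follows exactly the route the paper intends: the paper asserts that Theorem \ref{Thm:conv-DL-zero} ``immediately follows from Theorem \ref{Thm:potential-zero} and the equality \eqref{XTy-density}'' and gives no further argument, and what you have written is precisely the bookkeeping that makes that ``immediate'' step explicit (apply Euler's reflection formula at $k=0$, keep $\Pi((s-sx,+\infty))$ unsplit against the $O(\varepsilon_1(sx))$ error to land on $O(\varepsilon_3(s,x))$, note uniformity on compact $I\subset(0,1)$, and observe that the convention $1/\Gamma(0)=0$ handles $\beta=1$). There is no gap and no divergence from the paper's intended argument.
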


\begin{Rem}
By Remark \ref{Rem:Pi-zero}, if $0<\beta<1$, then $\varepsilon_2(x)=o(x^{-\beta})$, as $x\to0+$.	
\end{Rem}

\section{Proofs of Theorems \ref{Thm:potential-infty} and \ref{Thm:potential-zero}} \label{sec:Proof-infty}

We imitate the method of \cite{Ter16}.
First, we will give a representation formula of a potential density in terms of the Laplace exponent.

\begin{Lem}\label{Lem:potential}
Let $X=(X_t)_{t\geq0}$ be a killed subordinator which is not identically zero. Let $N\in\bN$ and $\lambda\in(0,+\infty)$ be fixed. Assume that the Laplace exponent $\Phi(z)$ satisfies
\begin{align}\label{condition:integrable}
    \int_{-\infty}^{+\infty}
    \bigg|
    \bigg(
    \bigg(\frac{d}{dz}\bigg)^N
    \frac{1}{\Phi}\bigg)
    (\lambda+i\theta)
    \bigg|\,d\theta<+\infty.	
\end{align}
Then the potential measure $U|_{(0,+\infty)}$ restricted over $(0,+\infty)$ admits a continuous density 
\begin{align}\label{u(x)}
   u(x)=\frac{x^{-N}e^{\lambda x}}{2\pi}\int_{-\infty}^{+\infty}
       e^{i\theta x}
       \bigg(
      \bigg(-\frac{d}{dz}\bigg)^N \frac{1}{\Phi}\bigg)
    (\lambda+i\theta)
        \,d\theta,
       \quad
       0<x<+\infty.	
\end{align}
\end{Lem}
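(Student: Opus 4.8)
The plan is to recover the potential density by Laplace (Fourier) inversion of the identity \eqref{Laplace-U-differential}. The starting point is that, by \eqref{Laplace-U-differential} with $z=\lambda+i\theta$, the function $\theta\mapsto \bigl((-d/dz)^N\,1/\Phi\bigr)(\lambda+i\theta)$ is the Fourier transform of the finite measure $x^N e^{-\lambda x}\,U(dx)$ on $[0,+\infty)$. Indeed
\begin{align}
  \int_{[0,+\infty)} e^{-i\theta x}\,\bigl(x^N e^{-\lambda x}\,U(dx)\bigr)
  =
  \int_{[0,+\infty)} x^N e^{-(\lambda+i\theta)x}\,U(dx)
  =
  \Bigl(-\tfrac{d}{dz}\Bigr)^N \frac{1}{\Phi}\Big|_{z=\lambda+i\theta},
\end{align}
and $\lambda>0$ guarantees that $x^N e^{-\lambda x}\,U(dx)$ is a finite measure because $U$ has at most exponential growth (this follows from \eqref{Laplace-U}, since $1/\Phi(z)$ is finite for $\mathrm{Re}\,z>0$). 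First I would record this finiteness carefully, so that the Fourier transform is well-defined and continuous in $\theta$.

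Next, under hypothesis \eqref{condition:integrable} the Fourier transform is in $L^1(\bR)$, so by the Fourier inversion theorem the measure $x^N e^{-\lambda x}\,U(dx)$ has a bounded continuous density with respect to Lebesgue measure on $\bR$, given by
\begin{align}
  \frac{1}{2\pi}\int_{-\infty}^{+\infty} e^{i\theta x}\,
  \Bigl(-\tfrac{d}{dz}\Bigr)^N \frac{1}{\Phi}\Big|_{z=\lambda+i\theta}\,d\theta.
\end{align}
Dividing by $x^N e^{-\lambda x}$ (which is smooth and strictly positive on $(0,+\infty)$) then shows that $U|_{(0,+\infty)}(dx)$ has the continuous density $u(x)$ displayed in \eqref{u(x)}. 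Since the left-hand quantity $x^N e^{-\lambda x}\,u(x)$ does not depend on $N$ or $\lambda$ beyond being the density of $x^N e^{-\lambda x}\,U(dx)$, the formula is automatically consistent for different admissible choices of $N,\lambda$; I would remark on this only briefly.

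The one genuinely delicate point is the application of Fourier inversion: strictly speaking the inversion theorem in the clean ``$\hat\mu\in L^1\Rightarrow\mu$ has continuous density'' form is usually stated for probability or finite measures on $\bR$, so I would phrase it for the finite measure $\mu(dx)=x^N e^{-\lambda x}\,U(dx)$ (extended by zero to $(-\infty,0)$) and invoke the standard result (e.g.\ via approximation by $e^{-\sigma\theta^2/2}$ and dominated convergence, or by citing a reference such as \cite{Sat}). One should also note that continuity of $u$ on all of $(0,+\infty)$, rather than merely existence of a density, comes precisely from continuity of the inversion integral in $x$, which holds because the integrand is dominated by the $L^1$ bound uniformly in $x$. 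The rest is routine: the main obstacle is purely bookkeeping about which measures are finite and the exact hypotheses of the inversion theorem, not any substantial estimate.
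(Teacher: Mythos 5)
Your proposal is correct and follows essentially the same route as the paper: substitute $z=\lambda+i\theta$ into \eqref{Laplace-U-differential} to exhibit $\theta\mapsto\bigl((-d/dz)^N\,1/\Phi\bigr)(\lambda+i\theta)$ as the Fourier transform of the finite measure $x^N e^{-\lambda x}\,U(dx)$, then apply the $L^1$ Fourier inversion theorem and divide by $x^N e^{-\lambda x}$. The paper simply cites the inversion theorem (Durrett, Theorem 3.3.14) where you spell out the finiteness of the measure and the source of continuity, but there is no substantive difference.
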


\begin{proof}
By substituting $z=\lambda+i\theta$ into \eqref{Laplace-U-differential}, we have
\begin{align}
	\int_{[0,+\infty)} x^N e^{-\lambda x-i\theta x}\,U(dx)
	=
	\bigg(
	\bigg(-\frac{d}{dz}\bigg)^N
	\frac{1}{\Phi}\bigg)
    (\lambda+i\theta),
    \quad
    \theta\in\bR.
\end{align}
By the inversion formula (see for example \cite[Theorem 3.3.14]{Dur}), we obtain the desired result.	
\end{proof}

We now prove Theorem \ref{Thm:potential-infty} by using Lemma \ref{Lem:potential}.

\begin{proof}[Proof of Theorem \upshape\ref{Thm:potential-infty}]
Recall that the Fourier transforms of gamma distributions can be calculated as
\begin{align}
   \frac{1}{\Gamma(1+\alpha)}
   \int_0^{+\infty}
   e^{-i\theta x}
   e^{-\lambda x}
   x^{\alpha}\,dx
   =
   \frac{1}{(\lambda+i\theta)^{1+\alpha}},
   \quad
   \alpha,\lambda\in(0,+\infty),\;
   \theta\in\bR.	
\end{align}
By the inversion formula,
\begin{align}\label{Proof-0}
   \frac{1}{\Gamma(1+\alpha)}
   e^{-\lambda x}x^\alpha
   =
   \frac{1}{2\pi}\int_{-\infty}^{+\infty} \frac{e^{i\theta x}}{(\lambda+i\theta)^{1+\alpha}}\,d\theta,
   \quad
   \alpha, \lambda ,x \in(0,+\infty).	
\end{align}
Substituting $\lambda=x^{-1}$ and $\alpha=N-1+\alpha_k$ into \eqref{Proof-0}, we have
\begin{align}
	 \frac{1}{\Gamma(N+\alpha_k)}
   e^{-1}x^{N-1+\alpha_k}
   =
   \frac{1}{2\pi}
   \int_{-\infty}^{+\infty}
   \frac{e^{i\theta x}}{(x^{-1}+i\theta)^{N+\alpha_k}}\,d\theta,
   \quad
   0<x<+\infty,
\end{align}
which implies
\begin{align}
&\frac{1}{\Gamma(\alpha_k)}x^{-1+\alpha_k}
\\
&=
\frac{x^{-N} e}{2\pi}
\int_{-\infty}^{+\infty}
e^{i\theta x}\frac{\alpha_k(1+\alpha_k)\cdots(N-1+\alpha_k)}{(x^{-1}+i\theta)^{N+\alpha_k}}\,d\theta,
\quad
   0<x<+\infty.	
   \label{Proof-1}
\end{align}

By the assumption \ref{condition:zero}, we can take some constants $r>0$ and $C>0$ such that, $R(t)$ is non-increasing on $(0,\sqrt{2}r)$, and for any $z\in\bC$ with ${\rm Re}\,z>0$ and $|z|\leq \sqrt{2}r$, we have
\begin{align}
   \bigg|
  \bigg(-\frac{d}{dz}\bigg)^N \frac{1}{\Phi(z)}
   -
   \sum_{k=0}^n\frac{\alpha_k (1+\alpha_k)\cdots (N-1+\alpha_k) }{c_k z^{N+\alpha_k}}
   \bigg|
   \leq
   C R(|z|).
   \label{Proof-2}
\end{align}
By the assumption \ref{condition:bdd},
the condition \eqref{condition:integrable} is satisfied for any $\lambda>0$.
By Lemma \ref{Lem:potential}, we see that $U|_{(0,+\infty)}$ admits a continuous density $u(x)$ expressed in the form  
\eqref{u(x)}. Substituting $\lambda=x^{-1}$ into \eqref{u(x)}, we have
\begin{align}
   u(x)
   &=
  \frac{x^{-N}e}{2\pi}\int_{-\infty}^{+\infty}
       e^{i\theta x}
      \bigg(
      \bigg(-\frac{d}{dz}\bigg)^N 
      \frac{1}{\Phi}\bigg)
    (x^{-1}+i\theta)
        \,d\theta
   \\
   &=
   \frac{x^{-N}e}{2\pi}
   \int_{-r}^r e^{i\theta x}
      \bigg(
      \bigg(-\frac{d}{dz}\bigg)^N 
      \frac{1}{\Phi}\bigg)
    (x^{-1}+i\theta)\,d\theta
   +
   O(x^{-N}),
   \quad
   \text{as $x\to+\infty$}.
   \label{Proof-3}
\end{align}
Here we used \ref{condition:bdd} again.

Let $x\in(r^{-1},\infty)$.
Combining \eqref{Proof-1} with \eqref{Proof-2} and \eqref{Proof-3},
\begin{align}
  \bigg|u(x)-
  &\sum_{k=0}^n\frac{x^{-1+\alpha_k}}{c_k\Gamma(\alpha_k)}\bigg|
  \\
  =\,&
   \bigg|u(x)-\frac{x^{-N}e}{2\pi}
   \sum_{k=0}^n\int_{-\infty}^{+\infty} 
   e^{i\theta x}
   \frac{\alpha_k (1+\alpha_k)\cdots (N-1+\alpha_k)}
   {c_k(x^{-1}+i\theta)^{N+\alpha_k}}\, d\theta\bigg|
   \\
   \leq\,&
  \frac{x^{-N}e}{2\pi}\
   \sum_{k=0}^n
  \int_{\bR\setminus (-r,r)}
   \frac{|\alpha_k (1+\alpha_k)\cdots (N-1+\alpha_k)|}
   {|c_k|\cdot|x^{-1}+i\theta|^{N+\alpha_k}}
   \, d\theta
   \\
   &
   +
   \frac{x^{-N}e}{2\pi}\
   C\int_{-r}^r R(|x^{-1}+i\theta|)\,d\theta
   + O(x^{-N}),
   \quad
   \text{as $x\to+\infty$.}
   \label{Proof-4}
\end{align}
The first term of the right-hand side in \eqref{Proof-4} is $O(x^{-N})$, which follows from
\begin{align}
 \sup_{x\in(0,+\infty)}\int_{\bR\setminus (-r,r)}
   \frac{1}
   {|x^{-1}+i\theta|^{N+\alpha_k}}
   \, d\theta
   \leq
   \int_{\bR\setminus (-r,r)}\frac{d\theta}{|\theta|^{N+\alpha_k}}<+\infty.
\end{align}
Let us estimate the second term of the right-hand side in  \eqref{Proof-4}. Since $R(t)$ is non-increasing on $(0,\sqrt{2}r)$, we have
\begin{align}
\int_{-r}^r R(|x^{-1}+i\theta|)\,d\theta
&\leq
2\int_0^{x^{-1}}R(x^{-1})\,d\theta
+
2\int_{x^{-1}}^r R(|i\theta|)\,d\theta
\\
&=
2x^{-1}R(x^{-1})
+
2\int_{x^{-1}}^r R(t)\,dt.
\end{align}
Therefore we obtain the desired result.
\end{proof}

The proof of Theorem \ref{Thm:potential-zero} is almost the same as that of Theorem \ref{Thm:potential-infty}, as we shall see in the following.

\begin{proof}[Proof of Theorem \upshape\ref{Thm:potential-zero}]
By the assumption \ref{condition:infty}, there exist some constants $r>0$ and $C>0$ such that, $R(t)$ is non-increasing on $(r,+\infty)$, and for any $z\in\bC$ with ${\rm Re}\,z>0$ and $|z|>r$, we have
\begin{align}
    \bigg|
    \frac{\Phi'(z)}{\Phi(z)^2}
    -\sum_{k=0}^n\frac{\beta_k }{c_k z^{1+\beta_k}}
    \bigg|
    \leq
    C R(|z|).
    \label{Proof-11}	
\end{align}
Then the condition \eqref{condition:integrable} is satisfied for any $\lambda\in(0,+\infty)$ with $N=1$.
By Lemma \ref{Lem:potential}, we see that $U|_{(0,+\infty)}$ admits a continuous density $u(x)$ expressed in the form  
\eqref{u(x)}.

Let $x\in(0,r^{-1})$. Substituting $\lambda=x^{-1}$	into \eqref{u(x)} with $N=1$, we have
\begin{align}
u(x)
   &=
  \frac{x^{-1}e}{2\pi}\int_{-\infty}^{+\infty}
       e^{i\theta x}
       \frac{\Phi'(x^{-1}+i\theta)}{\Phi(x^{-1}+i\theta)^2}
        \,d\theta.
          \label{Proof-12}
\end{align}
By a similar calculation as in \eqref{Proof-4}, we see that
\begin{align}
 \bigg|u(x)-\sum_{k=0}^n\frac{x^{-1+\beta_k}}{c_k\Gamma(\beta_k)}\bigg|
 &=
   \bigg|u(x)
   -\frac{x^{-1}e}{2\pi}
   \sum_{k=0}^n
   \int_{-\infty}^{+\infty} e^{i\theta x}\frac{\beta_k }
   {c_k(x^{-1}+i\theta)^{1+\beta_k}}\, d\theta\bigg|
   \\
   &\leq
   \frac{x^{-1}e}{2\pi}
   C
   \int_{-\infty}^{+\infty}  R(|x^{-1}+i\theta|)\,d\theta.
   \label{Proof-13}	
\end{align}
Let us estimate the right-hand side of \eqref{Proof-13}. Since $R(t)$ is non-increasing on $(r,+\infty)$, we have
\begin{align}
   \int_{-\infty}^{+\infty} R(|x^{-1}+i\theta|)\,d\theta
   &\leq
   2\int_0^{x^{-1}}R(x^{-1})\,d\theta
   +
   2\int_{x^{-1}}^{+\infty} R(|i\theta|)\,d\theta
   \\
   &=
   2x^{-1}R(x^{-1})
   +
   2\int_{x^{-1}}^{+\infty} R(t)\,dt. 
   \label{Proof-14}	
\end{align}
We now complete the proof.
\end{proof}

\section{Examples}\label{sec:Ex}

We can construct a variety of specific examples of our abstract results by referencing \cite[Section 5.2.2]{BBKRSV} and \cite[Chapter 16]{SchSonVon}.
For example, we can apply our abstract results to the independent sum of stable subordinators, as we shall see in the next example.

\begin{Ex}
\label{Ex:double-stable-infty}
Let us consider the case 
\begin{align}
\Phi(z)=C_1z^\alpha+C_2 z^\beta
\end{align}
for some $C_1,C_2\in(0,+\infty)$ and $0\leq\alpha<\beta\leq 1$. We remark that this case was studied in \cite[Example 5.25]{Kyp}. If $0<\alpha<\beta<1$, then the corresponding subordinator $X$ is the independent sum of an $\alpha$-stable subordinator and a $\beta$-stable subordinator.
Let us check that the assumptions of Theorem \ref{Thm:potential-infty} are satisfied.
Note that
\begin{align}
  \frac{1}{\Phi(z)}
  =
  \frac{1}{C_1z^{\alpha}(1+C_2 z^{\beta-\alpha}/C_1)}
  =
  \frac{1}{C_1}\sum_{k=0}^\infty 
  \bigg(-\frac{C_2}{C_1}\bigg)^k z^{-\alpha+k(\beta-\alpha)},
\end{align}
for any $z\in\bC$ with ${\rm Re}\,z>0$ and $C_2|z|^{\beta-\alpha}/C_1<1$.
Fix $n\in\bN\cup\{0\}$ arbitrarily. Take $N\in\bN$ large enough so that $-N-\alpha+(n+1)(\beta-\alpha)<-1$. 
Then we have
\begin{align}
   \bigg(\frac{d}{dz}\bigg)^N
   \bigg(
   \frac{1}{\Phi(z)}
   -
    \frac{1}{C_1}\sum_{k=0}^n
  \bigg(-\frac{C_2}{C_1}\bigg)^k z^{-\alpha+k(\beta-\alpha)}
   \bigg)
   =
   O(|z|^{-N-\alpha+(n+1)(\beta-\alpha)}),
   \\
   \text{as $z\to 0$ in $\{z\in\bC\mid {\rm Re}\,z>0\}$.}	
\end{align}
Therefore the condition \ref{condition:zero} is satisfied with 
\begin{align}
	c_k=C_1\bigg(-\frac{C_1}{C_2}\bigg)^k,
	\quad
	\alpha_k=\alpha-k(\beta-\alpha)
	\quad
	\text{and}
	\quad
	R(t)=t^{-N-\alpha+(n+1)(\beta-\alpha)}.
\end{align}
It is easily seen that there exists a polynomial $p_N(z^\alpha,z^\beta)$ of degree at most $N$ in $z^\alpha$ and $z^\beta$ such that
\begin{align}
    \bigg(\frac{d}{dz}\bigg)^N \frac{1}{\Phi(z)}=\frac{p_N(z^\alpha,z^\beta)}{z^N(C_1 z^{\alpha}+C_2 z^{\beta})^{N+1}}.
\end{align}
Hence, for any $r>0$, there exists a constant $C=C(r,N)>0$ such that
\begin{align*}
    \bigg|\bigg(\frac{d}{dz}\bigg)^N \frac{1}{\Phi(z)}\bigg|
    \leq
    C|z|^{-N-\beta},
    \quad
    \text{on $\{z\in\bC\mid {\rm Re}\,z>0,\, |{\rm Im}\,z|>r\}$},	
\end{align*}
which implies
\begin{align}
		\sup_{\lambda>0}\int_{\bR\setminus(-r,r)}
   \bigg|
   \bigg(
   \bigg(\frac{d}{dz}\bigg)^N 
   \frac{1}{\Phi}\bigg)(\lambda+i\theta)\bigg|\,d\theta
   	\leq
   	C
   	\int_{\bR\setminus(-r,r)}
   	|\theta|^{-N-\beta}
   	\,d\theta
   	<+\infty.
\end{align}
Therefore the condition \ref{condition:bdd} is satisfied.
 By Theorem \ref{Thm:potential-infty}, we see that $U|_{(0,+\infty)}(dx)$ admits a density $u(x)$ $(0<x<+\infty)$ which is continuous on $(0, +\infty)$ and can be estimated as 
\begin{align}
    u(x)=\,&\frac{1}{C_1}\sum_{k=0}^n 
    \bigg(-\frac{C_2}{C_1}\bigg)^k
    \frac{x^{-1+\alpha-k(\beta-\alpha)}}{\Gamma(\alpha-k(\beta-\alpha))}
     \\
     &+O(x^{-1+\alpha-(n+1)(\beta-\alpha)}),
    \quad
    \text{as $x\to+\infty$.}
    \label{Ex:double-stable-infty-1}	
\end{align}
In addition, $\lim_{\lambda\to+\infty}\Phi(\lambda)=+\infty$ and hence $U(\{0\})=0$.
Moreover, we can obtain an explicit formula of $u(x)$. See Remark \ref{Rem:double-stable}.
\end{Ex}

\begin{Ex}\label{Ex2:double-stable-infty}
Under the setting of Example \ref{Ex:double-stable-infty},
the tail of the corresponding L\'evy measure is given by
\begin{align}\label{Ex:double-stable-levy}
  \Pi((x,+\infty))
  =
  \mathbbm{1}_{\{\alpha>0\}}\frac{C_1x^{-\alpha}}{\Gamma(1-\alpha)}
  +\frac{C_2 x^{-\beta}}{\Gamma(1-\beta)},
  \quad
  0<x<+\infty.	
\end{align}
If $\beta=1$, then the second term in the right-hand side is zero. 
Theorem \ref{Thm:conv-DL-infty} yields that,
for $s\in(0,+\infty)$, the finite measure 
\begin{align}
\bP\bigg(\frac{X_{T(s)-}}{s}\in dx,\; \frac{X_{T(s)-}}{s}<1\bigg),
\quad 0<x<1,
\end{align}
(which is a probability measure if $0<\alpha<\beta<1$) admits a density 
\begin{align}
   s\Pi(&(s-sx,+\infty))u(sx)
   \\
   =\,&
   \frac{\sin(\pi\alpha)}{\pi}x^{-1+\alpha}(1-x)^{-\alpha}
   \\
   &+
    \mathbbm{1}_{\{\alpha>0\}}\sum_{k=1}^n 
   \bigg(-\frac{C_2}{C_1}\bigg)^k
   \frac{s^{-k(\beta-\alpha)}}{\Gamma(\alpha-k(\beta-\alpha))\Gamma(1-\alpha)}
   x^{-1+\alpha-k(\beta-\alpha)}(1-x)^{-\alpha}
   \\
   &-
   \sum_{k=0}^{n-1}
   \bigg(-\frac{C_2}{C_1}\bigg)^{k+1}
   \frac{s^{-(k+1)(\beta-\alpha)}}{\Gamma(\alpha-k(\beta-\alpha))\Gamma(1-\beta)}
   x^{-1+\alpha-k(\beta-\alpha)}
   (1-x)^{-\beta}
   \\
   &+
   O(s^{-(n+1)(\beta-\alpha)}),
   \quad
   \text{as $s\to+\infty$,}
   \label{Ex-1}	
\end{align}
uniformly in $x\in I$ for any closed interval $I\subset (0,1)$. 
We can also obtain a similar asymptotic expansion of $\bP(X_{T(s)-}/s\in I)$ as $s\to+\infty$.
\end{Ex}

Next, we will consider a short-range version of Example \ref{Ex:double-stable-infty}.

\begin{Ex}\label{Ex:double-stable-zero}
Under the setting of Example \ref{Ex:double-stable-infty}, we can also check that the assumption
 of Theorem \ref{Thm:potential-zero} is satisfied, as we shall see below. Note that
\begin{align}
   \frac{1}{\Phi(z)}
   =
   \frac{1}{C_2 z^\beta (C_1z^{-(\beta-\alpha)}/C_2 +1)}
   =
   \frac{1}{C_2}
   \sum_{k=0}^\infty
   	\bigg(-\frac{C_1}{C_2}\bigg)^k
   	z^{-\beta-k(\beta-\alpha)},
\end{align}
for any $z\in\bC$ with ${\rm Re}\,z>0$ and $C_1|z|^{-(\beta-\alpha)}/C_2<1$. By differentiating both sides and multiplying by $-1$, we have
\begin{align}
    \frac{\Phi'(z)}{\Phi(z)^2}
    &=
    \frac{1}{C_2}\sum_{k=0}^\infty
    \bigg(-\frac{C_1}{C_2}\bigg)^k
    (\beta+k(\beta-\alpha))z^{-1-\beta-k(\beta-\alpha)}
    \\
    &=
    \frac{1}{C_2}\sum_{k=0}^n
    \bigg(-\frac{C_1}{C_2}\bigg)^k
    (\beta+k(\beta-\alpha))z^{-1-\beta-k(\beta-\alpha)}
     +
    O(|z|^{-1-\beta-(n+1)(\beta-\alpha)}),
    \\
    &\qquad
    \phantom{{C_2}^{-1}\sum_{k=0}^n
    (-C_1/C_2)^k
    (\beta+k(\beta-\alpha))z}
    \text{as $z\to\infty$ in $\{z\in\bC\mid {\rm Re}\,z>0\}$,}
\end{align}
for any $n\in\bN\cup\{0\}$. Hence the assumption \ref{condition:infty} is satisfied with
\begin{align}
	c_k=C_2\bigg(-\frac{C_2}{C_1}\bigg)^k,
	\quad
	\beta_k=\beta+k(\beta-\alpha)
	\quad
	\text{and}
	\quad
	R(t)
	=t^{-1-\beta-(n+1)(\beta-\alpha)}.
\end{align}
Theorem \ref{Thm:potential-zero} yields that $U|_{(0,+\infty)}(dx)$ admits a continuous density
\begin{align}
   u(x)
   =\frac{1}{C_2}
   \sum_{k=0}^n
   \bigg(-\frac{C_1}{C_2}\bigg)^k
   \frac{x^{-1+\beta+k(\beta-\alpha)}}{\Gamma(\beta+k(\beta-\alpha))}
   +
   O(x^{-1+\beta+(n+1)(\beta-\alpha)}),
   \quad\text{as $x\to0+$.}
\end{align}
Moreover we can obtain an explicit formula of $u(x)$. See Remark \ref{Rem:double-stable}. 
\end{Ex}

\begin{Ex}
Under the setting of Example \ref{Ex:double-stable-infty},  the tail of the corresponding L\'evy measure is given by \eqref{Ex:double-stable-levy}.
Therefore Theorem \ref{Thm:conv-DL-zero} implies that, for $s\in(0,+\infty)$, the finite measure 
\begin{align}
\bP\bigg(\frac{X_{T(s)-}}{s}\in dx,\;\frac{X_{T(s)-}}{s}<1 \bigg),
\quad 0<x<1,
\end{align}
(which is a probability measure if $0<\alpha<\beta<1$) admits a density
\begin{align}
   s\Pi(&(s-sx,+\infty))u(sx)
   \\
   =\,&
   \frac{\sin(\pi\beta)}{\pi}x^{-1+\beta}(1-x)^{-\beta}
   \\
   &+
   \sum_{k=1}^n 
   \bigg(-\frac{C_1}{C_2}\bigg)^k
   \frac{s^{k(\beta-\alpha)}}{\Gamma(\beta+k(\beta-\alpha))\Gamma(1-\beta)}
   x^{-1+\beta+k(\beta-\alpha)}(1-x)^{-\beta}
   \\
   &-
   \mathbbm{1}_{\{\alpha>0\}}\sum_{k=0}^{n-1} \bigg(-\frac{C_1}{C_2}\bigg)^{k+1}\frac{s^{(k+1)(\beta-\alpha)}}{\Gamma(\beta+k(\beta-\alpha))\Gamma(1-\alpha)}
   x^{-1+\beta+k(\beta-\alpha)}(1-x)^{-\alpha}
   \\
   &+
   O(s^{(n+1)(\beta-\alpha)}),
   \quad
   \text{as $s\to0+$,}	
\end{align}
uniformly in $x\in I$ for any closed interval $I\subset (0,1)$. 
We can also obtain a similar asymptotic expansion of $\bP(X_{T(s)-}/s\in I)$ as $s\to0+$.
\end{Ex}

\begin{Rem}
\label{Rem:double-stable}
Under the setting of Example \ref{Ex:double-stable-infty}, we can calculate $u(x)$ explicitly in a similar way as the proof of Theorem \ref{Thm:potential-zero}. Indeed,
for $x\in(0,+\infty)$ with $C_1 x^{(\beta-\alpha)}/C_2<1$, we use the dominated convergence theorem to get
\begin{align}
  	u(x)
  	&=
  	\frac{x^{-1}e}{2\pi}
  	\int_{-\infty}^{+\infty}
  	e^{i\theta x}
  	\frac{\Phi'(x^{-1}+i\theta)}{\Phi(x^{-1}+i\theta)^2}
  	\,d\theta
  	\\
  	&=
  	\frac{1}{C_2}
  	\sum_{k=0}^\infty
  	\bigg(-\frac{C_1}{C_2}\bigg)^k
  	\frac{x^{-1}e}{2\pi}
  	\int_{-\infty}^{+\infty}
  	e^{i\theta x}
  	\frac{\beta+k(\beta-\alpha)}{(x^{-1}+i\theta)^{1+\beta+k(\beta-\alpha)}}d\theta
  	\\
  	&=
  	\frac{1}{C_2}
  	\sum_{k=0}^\infty
  	\bigg(-\frac{C_1}{C_2}\bigg)^k
  	\frac{x^{-1+\beta+k(\beta-\alpha)}}{\Gamma(\beta+k(\beta-\alpha))}
  	\\
  	&= 
  	\frac{x^{-1+\beta}}{C_2}
  	E_{\beta-\alpha,\beta}
  	\bigg(-\frac{C_1 x^{\beta-\alpha}}{C_2}\bigg),
  	\label{Rem:double-stable-1}
\end{align}
where $E_{a,b}(z)$ denotes the two-parameter Mittag-Leffler function
\begin{align}
	E_{a,b}(z)
	=
	\sum_{k=0}^\infty
	\frac{z^k}{\Gamma(ka+b)},
	\quad
	z\in\bC,\,a,b>0.
\end{align} 
We remark that the condition for the dominated convergence theorem can be checked by a similar calculation as in \eqref{Proof-14}. 
We use \eqref{u(x)} with $N=2$ to see that $u(x)$ can be extended to an analytic function on $\{z\in\bC\mid {\rm Re}\, z>0\}$.
In addition,  the right-hand side of \eqref{Rem:double-stable-1} can also be extended to an analytic function on $\{z\in\bC\mid {\rm Re}\, z>0\}$. 
Therefore we use the identity theorem for analytic functions to obtain
\begin{align}
	u(x)
	=
  	\frac{x^{-1+\beta}}{C_2}
  	E_{\beta-\alpha,\beta}
  	\bigg(-\frac{C_1 x^{\beta-\alpha}}{C_2}\bigg),
  	\quad
  	0<x<+\infty,
  	\label{Rem:double-stable-2}
\end{align}
which recovers \cite[Example 5.25]{Kyp}. Combining \eqref{Rem:double-stable-2} with the asymptotic expansion (see \cite[Section 18.1, (21)]{EMOT} for example)
\begin{align}
	E_{a,b}(-x)
	=
	\sum_{k=1}^{n+1}
	\frac{(-1)^{k-1}x^{-k}}{\Gamma(-ka+b)}+O(x^{-(n+2)}),
	\quad
	\text{as $x\to+\infty$, $0<a<2$, $n\in\bN\cup\{0\}$, }
\end{align}
we obtain \eqref{Ex:double-stable-infty-1} again.

\end{Rem}

Next, let us focus on geometric stable subordinators. 

\begin{Ex}\label{Ex:geometric}
Let us consider the case 
\begin{align}
\Phi(z)=\log(1+z^\alpha)
\end{align}
for some $0<\alpha\leq 1$. The corresponding subordinator $X$ is called a geometric $\alpha$-stable subordinator, which was studied in \cite[Section 2]{SikSonVon} and \cite[Example 5.11]{BBKRSV}, for example.  If $\alpha=1$, then $X$ is called a gamma subordinator \cite[Example 5.10]{BBKRSV}.
In the following we will check that the assumptions of Theorem \ref{Thm:potential-infty} are satisfied.
We denote by
\begin{align}
    \frac{1}{\log(1+z)}
   =
   \sum_{k=-1}^\infty b_k z^k,
   \quad
   \text{$z\in\bC$ with $0<|z|<1$,}
   \label{Taylor-z-log(1+z)} 	
\end{align}
the Laurent series expansion of $1/\log(1+z)$.
Then the coefficients $b_{-1}, b_0, b_1,\ldots$ are inductively determined by the equations
\begin{align}
	b_{-1}=1,\quad
	\sum_{k=-1}^{n-1} \frac{(-1)^{n-k-1}}{n-k} b_{k}=0,
	\quad
	n\in\bN.
	\label{coefficient}
\end{align}
See Remark \ref{Rem:Taylor}. For example, $b_0=1/2$, $b_1=-1/12$, $b_2=1/24$ and $b_3=-19/720$.
For $z\in\bC$ with ${\rm Re}\,z>0$ and $|z|<1$, we have
\begin{align}
   \frac{1}{\Phi(z)}
   =
   \frac{1}{\log(1+z^\alpha)}
   =
   \sum_{k=-1}^\infty
   b_k z^{k\alpha}.	
\end{align}
Fix $n\in\bN\cup\{0\}$ arbitrarily. Take $N\in\bN$ large enough so that $-N+n\alpha<-1$. Then
\begin{align}
   \bigg(
   \frac{d}{dz}
   \bigg)^N
   \bigg(
   \frac{1}{\Phi(z)}
   -
   \sum_{k=-1}^{n-1} b_k z^{k\alpha}
   \bigg)
   =
   O(|z|^{-N+n\alpha}),
   \\
   \text{as $z\to0$ in $\{z\in\bC\mid {\rm Re}\,z>0\}$.}	
\end{align}
Hence the condition \ref{condition:zero} is satisfied with
\begin{align}
	c_k=1/b_{k-1},
	\quad
	\alpha_k=-(k-1)\alpha
	\quad\text{and}\quad
	R(t)=t^{-N+n\alpha}.
\end{align}
In addition, there exist polynomials $p_{k,N}(z^\alpha)$  $(k=1,2,\dots,N)$ of degree at most $N$ in $z^\alpha$ such that
\begin{align}
  \bigg(\frac{d}{dz}\bigg)^N
  \frac{1}{\Phi(z)}
  =
  \sum_{k=1}^N
  \frac{p_{k,N}(z^\alpha)}
       {(\log(1+z^\alpha))^{k+1}  (z+z^{\alpha+1})^N}.	
\end{align}
Hence, for any $r>0$, there exists a constant $C=C(r,N)>0$ such that
\begin{align}
   \bigg|
   \bigg(\frac{d}{dz}\bigg)^N
  \frac{1}{\Phi(z)}
  \bigg|
  \leq
  C|z|^{-N},
  \quad
  \text{on $\{z\in\bC\mid {\rm Re}\, z>0\,\,\text{and}\,\, |{\rm Im}\,z|>r\}$}. 	
\end{align}
Since $N>1+n\alpha\geq1$, we have
\begin{align}
		\sup_{\lambda>0}\int_{\bR\setminus(-r,r)}
   \bigg|
   \bigg(
   \bigg(\frac{d}{dz}\bigg)^N 
   \frac{1}{\Phi}\bigg)(\lambda+i\theta)\bigg|\,d\theta
   	\leq
   	C
   	\int_{\bR\setminus(-r,r)}
   	|\theta|^{-N}
   	\,d\theta
   	<+\infty.
\end{align}
Therefore the condition \ref{condition:bdd} is satisfied. 
We use Theorem \ref{Thm:potential-infty} to see that $U|_{(0,+\infty)}(dx)$ admits a continuous density 
\begin{align}
	u(x)
	=
	\sum_{k=-1}^{n-1} \frac{b_k x^{-1-k\alpha}}{\Gamma(-k\alpha)}
	+
	O(x^{-1-n\alpha}),
	\quad
	\text{as $x\to+\infty$.}
\end{align}
In addition, $\lim_{\lambda\to+\infty}\Phi(\lambda)=+\infty$ and hence $U(\{0\})=0$.
\end{Ex}

\begin{Ex}\label{Ex2:geometric}
Under the setting of Example \ref{Ex:geometric},
the corresponding L\'evy measure is given by
\begin{align}
  \Pi(dx)
  =
  \frac{\alpha E_\alpha(-x^\alpha)}{x}\,dx.
  \quad
  0<x<+\infty,	
\end{align}
where $E_\alpha(z)$ denotes the Mittag-Leffler function with parameter $\alpha$:
\begin{align}\label{ML}
 	E_\alpha(z)
 	=
 	\sum_{k=0}^\infty
 	\frac{z^k}{\Gamma(k\alpha+1)},
 	\quad
 	z\in\bC.
\end{align}
See \cite[Example 5.11]{BBKRSV} for the details.
Note that our definition of $E_\alpha(x)$ is based on \cite{EMOT}, and is slightly different from the definition in \cite[Example 5.11]{BBKRSV}.
Fix $n\in\bN\cup\{0\}$ arbitrarily.
By using the asymptotic expansion (see for example \cite[Section 18.1, (7)]{EMOT})
\begin{align}
   E_\alpha(-x)
   =
   \sum_{\ell=1}^{n+1} \frac{(-1)^{\ell-1} x^{-\ell}}{\Gamma(-\ell\alpha+1)}
   +O(x^{-(n+2)}),
   \quad \text{as $x\to+\infty$,}	
\end{align}
we have
\begin{align}
  \Pi((x,+\infty))
  &=
  \int_{x}^{+\infty}
  \frac{\alpha E_\alpha(-t^\alpha)}{t}\,dt
  \\
  &=	
  \sum_{\ell=1}^{n+1}
  \frac{(-1)^{\ell-1}x^{-\ell\alpha}}{\ell\Gamma(-\ell\alpha+1)}+O(x^{-(n+2)\alpha}),
  \quad
  \text{as $x\to+\infty$.}
\end{align}
Then Theorem \ref{Thm:conv-DL-infty} yields that the probability measure
\begin{align}
	\bP\bigg(\frac{X_{T(s)-}}{s}\in dx\bigg),
\quad 0<x<1,
\end{align}
admits a density
\begin{align}
   s\Pi(&(s-sx,+\infty))u(sx)
   \\
   =\,&
   \frac{\sin(\pi\alpha)}{\pi}x^{-1+\alpha}(1-x)^{-\alpha}
   \\
   &+
  \sum_{k=-1}^{n-1} \sum_{\ell=1}^{n+1}  \mathbbm{1}_{\{1\leq k+\ell \leq n \}}
   \frac{(-1)^{\ell-1} b_k s^{-(k+\ell)\alpha}}{\ell\Gamma(-k\alpha)  \Gamma(-\ell \alpha+1)}x^{-1-k\alpha}(1-x)^{-\ell\alpha}
   \\
   &+O(s^{-(n+1)\alpha}),
   \quad
   \text{as $s\to+\infty$,}	
\end{align}
uniformly in $x\in I$ for any closed interval $I\subset (0,1)$. 
We can also obtain a similar asymptotic expansion of $\bP(X_{T(s)-}/s\in I)$ as $s\to+\infty$.

\end{Ex}

\begin{Rem}\label{Rem:Taylor}
Let us prove \eqref{coefficient}.
Recall that
\begin{align}
   \log (1+z)
   =
   \sum_{n=1}^\infty \frac{(-1)^{n-1}}{n} z^n,
   \quad
   \text{$z\in\bC$ with $|z|<1$},	
\end{align}
and hence
\begin{align}
   1=\log (1+z)\frac{1}{\log(1+z)}
    =
    \sum_{n=0}^\infty\bigg(\sum_{k=-1}^{n-1} \frac{(-1)^{n-k-1}}{n-k} b_{k}\bigg)z^n,	
\end{align}
which implies \eqref{coefficient}. 	
\end{Rem}

Finally, we will consider the independent sum of a stable subordinator and a geometric stable subordinator.

\begin{Ex}\label{Ex:sum-stable-gamma}
Let us consider the case
\begin{align}
   \Phi(z)=z^\beta+\log(1+z^\alpha)	
\end{align}
for some $\alpha, \beta\in(0,1]$. Let us show that the assumption of Theorem \ref{Thm:potential-zero} is satisfied. We have
\begin{align}
  \frac{1}{\Phi(z)}
  =
  \frac{1}{z^\beta (1+z^{-\beta}\log(1+z^\alpha))}
  =
  \sum_{k=0}^\infty z^{-(k+1)\beta}(-\log(1+z^\alpha))^{k}	
\end{align}
for any $z\in\bC$ with ${\rm Re}\,z>0$ and $|z^{-\beta}\log(1+z^\alpha)|<1$. Hence
\begin{align}
  \bigg(-\frac{d}{dz}\bigg)&
  \bigg(
  \frac{1}{\Phi(z)}
  -
  \frac{1}{z^\beta}
  \bigg)
  \\
  &=
  \sum_{k=1}^\infty
  z^{-(k+1)\beta}(-\log(1+z^\alpha))^{k}
  \bigg(\frac{(k+1)\beta}{z}-\frac{k\alpha z^{\alpha-1}}{(1+z^\alpha)\log(1+z^\alpha)}\bigg)
  \\
  &=
  O(|z|^{-1-2\beta}\log|z|),
  \quad
  \text{as $z\to\infty$ in $\{z\in\bC\mid {\rm Re}\,z>0\}$.}	
\end{align}
Therefore the condition \ref{condition:infty} is satisfied.	We use Theorem \ref{Thm:potential-zero} to see that $U|_{(0,+\infty)}(dx)$ admits a continuous density
\begin{align}
   u(x)
   =
   \frac{x^{-1+\beta}}{\Gamma(\beta)}+O(x^{-1+2\beta}
   |\log x|),
   \quad
   \text{as $x\to 0+$.}	
\end{align}
In addition, $\lim_{\lambda\to+\infty}\Phi(\lambda)=+\infty$ and hence $U(\{0\})=0$. 
\end{Ex}

\begin{Ex}
Under the setting of Example \ref{Ex:sum-stable-gamma}, the tail of the corresponding L\'evy measure is given by
\begin{align}
   \Pi((x,+\infty))
   &=
   \frac{x^{-\beta}}{\Gamma(1-\beta)}
   +
   \int_x^{+\infty}
   \frac{\alpha E_\alpha(-t^\alpha)}{t}
   \,dt
   \\
   &=
   \frac{x^{-\beta}}{\Gamma(1-\beta)}
   +O(|\log x|),
   \quad
   \text{as $x\to0+$,}	
\end{align}
where $E_\alpha(z)$ is the Mittag-Leffler function given by \eqref{ML}.
By Theorem \ref{Thm:conv-DL-zero},  for any $s\in(0,+\infty)$, we see that the probability measure
\begin{align}
	\bP\bigg(\frac{X_{T(s)-}}{s}\in dx\bigg),
\quad 0<x<1,
\end{align}
admits a density
\begin{align}
   s\Pi(&(s-sx, +\infty))u(sx)
   \\
   &=
   \frac{\sin(\pi\beta)}{\pi}
   x^{-1+\beta}(1-x)^{-\beta}
   +
   O(s^{\beta}|\log s|),
   \quad
  \text{as $s\to0+$,}	
\end{align}
uniformly in $x\in I$ for any closed interval $I\subset (0,1)$. 
We can also obtain a similar asymptotic expansion of $\bP(X_{T(s)-}/s\in I)$ as $s\to0+$.	
\end{Ex}

\subsection*{Acknowledgements}
This research was partially supported by JSPS KAKENHI Grant Number JP24K16948, by the ISM Cooperative Research Symposium (2024-ISMCRP-5003) and by the Research Institute for Mathematical Sciences, an International Joint Usage/Research Center located in Kyoto University.

\newcommand{\etalchar}[1]{$^{#1}$}

\end{document}